\newtheorem{theorem}{Theorem}
\newtheorem{corollary}[theorem]{Corollary}
\theoremstyle{definition}
\theoremstyle{remark}
\newtheorem{remark}{Remark}[section]
\numberwithin{equation}{section}
\newcommand{\al}{\alpha}
\newcommand{\be}{\beta}
\newcommand{\ga}{\gamma}
\newcommand{\bn}{{\mathbb N}}
\newcommand{\na}{{\mathbb N}}
\newcommand{\re}{{\mathbb R}}
\newcommand{\rtn}{{\mathbb R^{2n}}}
\newcommand{\rn}{{\mathbb R^n}}
\newcommand{\brn}{{\mathbb R^n}}
\newcommand{\zp}{\mathbf Z^+}
\newcommand{\zptn}{\mathbf Z^{2n}_+}
\newcommand{\zpn}{\mathbf Z^{n}_+}
\newcommand{\norm}[2]{\left\|#1\right\|_{#2}}
\newcommand{\supp}{\operatorname{supp}}
\newcommand{\abs}[1]{\left|#1\right|}
\newcommand{\bq}{\begin{equation}}
\newcommand{\eq}{\end{equation}}
\begin{document}

\title[On the H\"ormander classes of bilinear pseudodifferential operators]{On the H\"ormander classes of bilinear pseudodifferential operators}

\author{\'Arp\'ad B\'enyi}
\author{Diego Maldonado}
\author{Virginia Naibo}
\author{Rodolfo H.  Torres}


\subjclass[2000]{Primary 35S05, 47G30; Secondary 42B15, 42B20}

\keywords{Bilinear pseudodifferential operators, bilinear
H\"ormander classes, symbolic calculus, Calder\'on-Zygmund theory.}

\thanks{Second author partially supported by the NSF under grant DMS 0901587. Fourth author supported in part by NSF  under grant DMS 0800492 and a General Research Fund allocation of the University of Kansas.}

\address{\'Arp\'ad B\'enyi, Department of Mathematics,
516 High St, Western Washington University, Bellingham, WA 98225,
USA.} \email{arpad.benyi@wwu.edu}

\address{Diego Maldonado, Department of Mathematics, 138 Cardwell Hall, Kansas State University,
Manhattan, KS 66506, USA.} \email{dmaldona@math.ksu.edu}

\address{Virginia Naibo, Department of Mathematics, 138 Cardwell Hall, Kansas State University,
Manhattan, KS 66506, USA.} \email{vnaibo@math.ksu.edu}

\address{Rodolfo Torres, Department of Mathematics, University of Kansas, Lawrence, KS
66045, USA.} \email{torres@math.ku.edu}

\date{\today}

\begin{abstract}
Bilinear
pseudodifferential operators with symbols in the bilinear analog of all the
H\"ormander classes are considered and the possibility of a symbolic calculus for the transposes of the operators in such classes is investigated. Precise results about which classes
are closed under transposition and can be characterized in terms
of asymptotic expansions are presented. This  work extends the results for more limited classes studied before in the literature and, hence,  allows the use of the symbolic calculus (when it exists) as an alternative way to recover the  boundedness on products of Lebesgue spaces for  the classes that yield operators with bilinear Calder\'on-Zygmund kernels.
Some boundedness properties for other classes  with estimates in the form of Leibniz' rule are presented as well.
\end{abstract}

\maketitle

\section{Introduction}

Many linear operators encountered in analysis are best understood when represented as  singular integral operators in the space domain, while others are better treated as pseudo-differential operators in the frequency domain.  In some particular situations both representations are readily available. In many others, however, one of them is only given abstractly, and  through the existence of a distributional  kernel or symbol which is hard or impossible to compute.  Both representations have proved to be tremendously useful. The representation of operators as pseudodifferential ones usually yields simple $L^2$ estimates, explicit formulas for the calculus of transposes and composition, and invariance properties under change of coordinates in smooth situations.
As it is known, this makes pseudodifferential operators an invaluable tool in the study of partial differential equations and they are employed to construct parametrices and study regularity properties of solutions.
The integral representation on the other hand, is often best suited for other $L^p$ estimates and motivates  or indicates what results should hold in other metric and measure theoretic situations where the Fourier transform is no longer available. This has
found numerous applications in complex analysis, operator theory,  and also in problems in partial differential equations where the  domains or functions involved have a  minimum amount of regularity.

Work on singular integral and pseudodifferential operators started with explicit classical examples and  was then directed to attack specific applications in other areas. Switching back and forth, many efforts where also oriented to the understanding of  naturally appearing technical questions and to the testing of the full power of new techniques as they developed. In fact, sometimes the technical analytic tools studied preceded  the applications in which they were much later used. The Calder\'on-Zygmund theory and the related real variable techniques played a tremendous role in all these accomplishments. This is explained in detail from both the historical  and technical points of view in, for example,  the book by Stein \cite{S}.

The study of bilinear operators within harmonic analysis is following a similar path. The first systematic treatment of bilinear singular integrals and pseudodifferential  operators  in the early work of
Coifman and Meyer \cite{cm0}, \cite{cm2}  originated from specific problems about Calder\'on's commutators, and soon lead to the study of general boundedness properties of pseudodifferential operators \cite{cm1}.  Later on, the work  of Lacey and Thiele \cite{lt1}, \cite{lt2}  on a specific singular integral operator (the bilinear Hilbert transform which also goes back to Calder\'on) and the new techniques developed by them immediately suggested the study of other bilinear operators and the need to understand and characterize their boundedness and computational properties. See for example Gilbert and Nahmod \cite{GN},  Muscalu et al. \cite{mtt}, Grafakos and Li \cite{GL2}, B\'enyi et al. \cite{BDNTTV}, to name a few.  The development of the symbolic calculus for bilinear pseudodifferential operators started in  B\'enyi and Torres \cite{bt1}  and was continued in B\'enyi et al. \cite{bnt}. Other  results specific to bilinear pseudodifferential operators
were obtained in B\'enyi et al. \cite{bt2},  \cite{b}, \cite{bgho},  \cite{bo}  and, much recently,  in Bernicot \cite{Ber1} and  Bernicot and Torres \cite{BerT}.
As in the linear case, many of the results obtained were motivated too by the
Calder\'on-Zygmund theory and its bilinear counterpart as developed in Grafakos and Torres \cite{gt1}; see also  Christ and Journ\'e \cite{CJ}, Kenig and Stein \cite{KS}, Maldonado and Naibo \cite{mn}.   The literature is by  now vast, see
\cite{T} for further references.

We want to contribute with this article to the understanding of the properties of  {\it all} the bilinear analogs of  the linear H\"ormander classes of pseudodifferential operators. These bilinear classes are denoted by $BS^m_{\rho, \delta}$ (see the next section for technical definitions). Only some particular cases of them have been studied before;
mainly the cases when $\rho =1$ or when $\rho=\delta=0$.  The symbolic calculus has only been developed for the case $\rho=1$ and $\delta=0$. Our goal is to complete the symbolic calculus for all the possible (and meaningful) values of
$\delta$ and $\rho$.

 We could quote from the introduction of H\"ormander's work \cite{H}: ``In this work the use of Fourier transformations has been emphasized; as a result no singular integral operators are apparent...", but
 we are clearly guided by previous works that relate, in the case of operators of order zero, to bilinear
 Calder\'on-Zygmund singular integrals. In fact, the existence of calculus for $BS^0_{1,\delta}$, $\delta<1$, gives an
 alternative way to prove the boundedness of such operators in the optimal range of  $L^p$ spaces directly from the multilinear $T1$-Theorem in \cite{gt1}. That is, without using Littlewood-Paley arguments as in the already cited monograph  \cite{cm1}
 or the work \cite{b}.

 While the composition of pseudodifferential operators (with linear ones)
 forces one to study  different classes of operators introduced in \cite{bnt},  previous results in the subject left some level of uncertainty about whether the computation of transposes
 could still  be accomplished within some other  bilinear H\"ormander classes.
The forerunner work \cite{bt1} dealt mainly with the class $BS^0_{1,0}$ and a significant
part of the proofs given in \cite{bt1} relied on the
so-called Peetre inequality which does not go through in general for other values of  $\rho\neq 1, \delta\neq0, m\neq0$.
  We resolve this problem in the present article  using  ideas inspired in part by some computations in Kumano-go \cite{Kumano-go}, and developing  the calculus of transposes for all the bilinear H\"ormander classes for which such calculus is possible.  The excluded classes are the ones for which $\rho=\delta=1$. This restriction is really necessary as proved in \cite{bt1}. In fact, as the linear class $S^0_{1,1}$, the class $BS^0_{1,1}$ is forbidden in the sense that two related pathologies occur: it is not closed under transposition and it contains
operators which fail to be bounded on product of $L^p$ spaces, even though  the associated kernels
for operators in this class are of bilinear Calder\'on-Zygmund type.

The analogy between results in the linear and multilinear situations is in general
only a guide to what could be expected to transfer from one context to the other.
Some multilinear results arise as natural counterparts to linear ones, but often the
techniques employed need to be substantially sharpened or replaced by new ones. It
is actually far more complicated to prove the existence of calculus in the bilinear case
than in the linear one. Some properties of the symbols of bilinear pseudodifferential operators on $\rn$ can be guessed
from those of linear operators in $\rtn$. Though some of our computations are reminiscent of those for
linear pseudodifferential operators or Fourier integral operators, the calculus of transposes
for bilinear operators does not follow from the linear results by doubling the number of dimensions.
Boundedness results cannot be obtained in this fashion either. The essential obstruction is the fact that the integral of a function of two $n$-dimensional variables $(x,y) \in \re^{2n}$ yields no information about the ($n$-dimensional) integral of its restriction to the diagonal $(x,x), x \in \rn$. On the other hand, a few \emph{point-wise }estimates can be obtained in a
more direct way from the linear case using the method of doubling the dimensions. For example,
it is useful to establish first precise point-wise estimates on the bilinear kernels associated to the operators in various
H\"ormander classes. We are able to derive them from the linear ones investigated by \'Alvarez-Hounie \cite{AH90}.

It is interesting too that some  results do not extend to the
multilinear context. A notorious example is the Calder\'on-Vaillancourt result in \cite{CV} for  the $L^2$ boundedness of the class $S^0_{0,0}$. One may expect the class $BS^0_{0,0}$ to map, say, $L^2 \times L^2 $ into $L^1$, but this fails  unless additional conditions on the symbol are imposed; see \cite{bt2}. Such class only maps into an optimal modulation space  ($M^{1,\infty}$) which is larger than $L^1$. See also \cite{bgho} and \cite{bo} for more details. Similarly (and using duality and the existence of the calculus for transposes), it is natural to ask whether  the class $BS^0_{\rho,\delta},$ with $0<\delta<\rho < 1$
maps $L^2 \times L^\infty$ into $L^2$ -- recall that H\"ormander's results in  \cite{H} give  that $S^0_{\rho,\delta}$
maps $L^2$ into $L^2$ for the same range of $\rho$ and $\delta$. Alternatively, it may only be possible to obtain the boundedness from $L^2 \times X$ into $L^2,$ where $X$ is a  space smaller than $L^\infty$.  Though we do not know the answer to the former questions, we give a result in the direction of the latter. We also obtain some other new boundedness properties involving Sobolev spaces which take the form of fractional Leibniz' rules.

In the next section we present the technical definitions and the precise statements of our results
about symbolic calculus. Sections 3 and 4  contain the proofs of those results. Section 5 contains the results about the point-wise estimates for the kernels, while Section 6 has the boundedness results alluded to before.\\

{\bf Acknowledgement.}  The authors would like to thank Kasso Okoudjou for  useful discussions regarding the results presented here and to the  anonymous
referee for his/her comments and suggestions.

\section{Symbolic  calculus in the bilinear  H\"ormander classes}

We start by recalling the linear pseudodifferential operators in the general H\"ormander classes $S^m_{\rho,\delta}$. These are operators of the form
\[
T_\sigma (f)(x)=\int_\brn \sigma (x, \xi)\widehat
f(\xi)e^{ix\cdot \xi}\, d\xi
\]
where the symbol $\sigma$ satisfies the estimates
\begin{equation*}
|\partial_x^\alpha\partial_\xi^\beta \sigma (x,
\xi)|\leq C_{\alpha\beta} (1+|\xi|)^{m+\delta
|\alpha|-\rho |\beta|},
\end{equation*}
for all $x,\,\xi \in\brn$, all multi-indices $\alpha,
\beta,$ and some positive constants
$C_{\alpha\beta}$. These operators are a priori defined for appropriate test functions.

In this article we study the natural bilinear analog
\[
T_\sigma (f, g)(x)=\int_\brn \int_\brn \sigma (x, \xi, \eta)\widehat
f(\xi)\widehat g(\eta)e^{ix\cdot (\xi+\eta)}\, d\xi d\eta,
\]
where the symbol $\sigma$ satisfies now the estimates
\begin{equation}\label{hormander}
|\partial_x^\alpha\partial_\xi^\beta\partial_\eta^\gamma \sigma (x,
\xi, \eta)|\leq C_{\alpha\beta\gamma} (1+|\xi|+|\eta|)^{m+\delta
|\alpha|-\rho (|\beta|+|\gamma|)},
\end{equation}
also for all $x,\,\xi,\,\eta\in\brn$, all multi-indices $\alpha,
\beta, \gamma$ and some positive constants
$C_{\alpha\beta\gamma}.$ The class of all symbols satisfying
\eqref{hormander} is denoted by $BS_{\rho, \delta}^m (\brn),$ or simply
$BS_{\rho, \delta}^m$ when it is clear from the context to which space the variables
$x, \xi, \eta$ belong to.

The transposes of such operators are defined as usual by the duality
relations
\[
\langle T(f, g), h\rangle=\langle T^{*1}(h, g), f \rangle=\langle
T^{*2}(f, h), g\rangle.
\]

We  will write
$$\sigma \sim \sum_{j=0}^\infty \sigma_j$$
if there is a  non-increasing sequence $m_N\searrow -\infty$ such
that
$$\sigma-\sum_{j=0}^{N-1} \sigma_j\in BS_{\rho, \delta}^{m_N},$$
for all $N>0$.

The spaces of test functions that we will use  will be the space $C^\infty_c$ of  infinitely differentiable  functions with compact support  or the Schwartz space $\mathcal S$. When given their usual topologies, their duals
are $\mathcal D'$  and $\mathcal S'$, the spaces of distributions and  of tempered distributions, respectively. We will also consider  $C_c^s$, $s\in\na,$ the space of functions with compact support and continuous derivatives up to order $s$; $W^{s, 2}$,  the Sobolev space of functions having derivatives in $L^2$  up to order $s$; and $W_0^{s, \infty}$, the  completion of  $C_c^s$ with respect to the norm $\sup_{\abs{\gamma}\le s}\|D^{\gamma}g\|_{L^\infty}$. Unless specified otherwise, the underlying space will be assumed to be $\brn$.

We develop a symbolic calculus for bilinear
pseudodifferential operators with symbols in all the bilinear
H\"ormander classes $BS_{\rho, \delta}^m,$ $m\in\mathbb R,$ $0\leq
\delta\le\rho\leq 1,$ $\delta<1.$

Our first two theorems state that the H\"ormander classes are closed
under transposition and that the symbols of the transposed operators have
appropriate asymptotic expansions.

\begin{theorem}[Invariance under transposition]\label{symbolic}
Assume that $0\leq \delta\le\rho\leq 1,$ $\delta<1,$ and $\sigma\in
BS_{\rho, \delta}^m$. Then, for $j=1,\,2,$
$T_\sigma^{*j}=T_{\sigma^{*j}},$ where $\sigma^{*j}\in BS_{\rho,
\delta}^m$.
\end{theorem}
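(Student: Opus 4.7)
The plan is to derive explicit oscillatory-integral representations for the transposed symbols and then verify \eqref{hormander} for them directly. By the symmetry of $BS^m_{\rho,\delta}$ in the two frequency variables it suffices to treat $j=1$; the case $j=2$ then follows by interchanging the roles of $\xi$ and $\eta$. Computing the integral kernel of $T_\sigma$ from its Fourier representation, swapping $x$ and $y$ as dictated by the duality $\langle T_\sigma^{*1}(h,g),f\rangle=\langle T_\sigma(f,g),h\rangle$, and Fourier-transforming back in the centred variables $u=y-x$, $v=y-z$, the $v$-integration collapses to a delta function in $\eta$ and leaves the formal identity
\begin{equation}\label{sigmastar1}
\sigma^{*1}(y,\xi,\eta)=\iint \sigma(y-u,\,\lambda-\xi-\eta,\,\eta)\,e^{-iu\cdot\lambda}\,du\,d\lambda.
\end{equation}

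The integrand in \eqref{sigmastar1} decays in neither $u$ nor $\lambda$, so the right-hand side must be interpreted as an oscillatory integral: one regularises by inserting a cutoff $\chi(\varepsilon u,\varepsilon\lambda)$ with $\chi\in C_c^\infty$ and $\chi(0,0)=1$, and transfers derivatives from the phase to the amplitude through the twin identities $e^{-iu\cdot\lambda}=(1+|u|^2)^{-A}(1-\Delta_\lambda)^A e^{-iu\cdot\lambda}$ and $e^{-iu\cdot\lambda}=(1+|\lambda|^2)^{-B}(1-\Delta_u)^B e^{-iu\cdot\lambda}$. For $A,B$ sufficiently large the regularised integral is absolutely convergent and its limit as $\varepsilon\to 0$ is independent of $\chi$, defining $\sigma^{*1}$ and, by differentiating \eqref{sigmastar1}, all of its derivatives unambiguously. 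After carrying out the differentiation in $(y,\xi,\eta)$ and the integrations by parts, each term is pointwise dominated by
$$
\frac{(1+|\lambda-\xi-\eta|+|\eta|)^{\mu}}{(1+|u|^2)^{A}(1+|\lambda|^2)^{B}},\qquad \mu=m+\delta(|\alpha|+2B)-\rho(|\beta|+|\gamma|+2A),
$$
and integrating out $u$ (which converges as soon as $2A>n$) reduces matters to bounding the remaining $\lambda$-integral by $(1+|\xi|+|\eta|)^{m+\delta|\alpha|-\rho(|\beta|+|\gamma|)}$.

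The main obstacle is exactly the one flagged in the introduction: the straightforward Peetre inequality used in \cite{bt1} for $BS^0_{1,0}$ no longer suffices when $\rho<1$ or $\delta>0$, because the weight in \eqref{hormander} couples all three variables. The key replacement is the elementary two-sided bound
$$
(1+|\xi|+|\eta|)\;\lesssim\;(1+|\lambda|)\bigl(1+|\lambda-\xi-\eta|+|\eta|\bigr),
$$
applied according to the sign of $\mu$ in order to trade a power of $(1+|\lambda|)$ against the $(1+|\lambda|^2)^{-B}$ denominator. To close the estimate one must choose $A$ and $B$ so that $B\delta\le A\rho$, which brings the effective exponent $\mu$ down to (at worst) the target $m+\delta|\alpha|-\rho(|\beta|+|\gamma|)$, and so that $2A(1-\rho)+2B(1-\delta)$ is large enough for the residual $\lambda$-integral to converge at infinity. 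The former is feasible precisely because $\delta\le\rho$, the latter because $\delta<1$; these are exactly the standing hypotheses of the theorem, consistent with the known failure of closure under transposition for the excluded class $BS^0_{1,1}$. The analogous calculation for $\sigma^{*2}$ is identical up to the swap of $\xi$ and $\eta$, yielding $\sigma^{*2}\in BS^m_{\rho,\delta}$ and completing the proof.
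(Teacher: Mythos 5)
There is a genuine gap, and it is precisely the one the paper flags in its introduction. Your oscillatory-integral representation of $\sigma^{*1}$ is fine (it matches \eqref{arepresentation} after obvious changes of variables), and the two regularising identities are legitimate. The problem is the pointwise estimate you claim afterwards: you assert that every term produced by $(1-\Delta_\lambda)^A(1-\Delta_u)^B$ is dominated by $(1+|\lambda-\xi-\eta|+|\eta|)^{\mu}(1+|u|^2)^{-A}(1+|\lambda|^2)^{-B}$ with $\mu=m+\delta(|\alpha|+2B)-\rho(|\beta|+|\gamma|+2A)$. That exponent corresponds to the single term in which all $2A$ $\lambda$-derivatives and all $2B$ $u$-derivatives land on the symbol. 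But $(1-\Delta_\lambda)^A(1-\Delta_u)^B$ is a polynomial in the two Laplacians, and its expansion contains terms with anywhere from $0$ to $2A$ (resp.\ $0$ to $2B$) derivatives hitting $\sigma$. The worst term is the one where all $2B$ $u$-derivatives hit $\sigma$ (each producing an $x$-derivative, hence $+\delta$) while \emph{none} of the $\lambda$-derivatives do (the constant part of $(1-\Delta_\lambda)^A$); its exponent is $\mu_{\max}=m+\delta(|\alpha|+2B)-\rho(|\beta|+|\gamma|)$, bigger than your $\mu$ by $2A\rho$. For this term Peetre gives a bound of order $(1+|\xi|+|\eta|)^{\mu_{\max}}$, which overshoots the target by $(1+|\xi|+|\eta|)^{2B\delta}$. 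Since you also need $B$ large for the $\lambda$-integral to converge at infinity, this loss is unbounded whenever $\delta>0$; the side condition $B\delta\le A\rho$ constrains only the single term you wrote down, not the dominant one. The overshoot is real, not an artifact of the Peetre step: already
\begin{equation*}
\int_{|\lambda|\le 1}\frac{(1+|\lambda-\xi-\eta|+|\eta|)^{m+2B\delta}}{(1+|\lambda|^2)^{B}}\,d\lambda \;\gtrsim\;(1+|\xi|+|\eta|)^{m+2B\delta}.
\end{equation*}
So the argument as written only closes when $\delta=0$, i.e.\ it reproduces the case of \cite{bt1}, which the paper explicitly says does not extend.

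The paper's proof circumvents this with a different mechanism. Instead of the unscaled $(1-\Delta_\lambda)^A$, it uses the $(\xi,\eta)$-adapted operator $(1+A^{2\delta}(-\Delta_z))^{l_0}$ with $A=1+|\xi|+|\eta|$, so each $z$-derivative of the symbol is automatically paired with a factor $A^{\delta}$. On the near region $|z|\lesssim A$ this yields $A^{\delta}\cdot A^{-\rho}\le1$ per derivative (here is exactly where $\delta\le\rho$ enters), while the $y$-integral produces $A^{-\delta n}$, recovered by the measure $A^{\delta n}$ of $\Omega_1$; on $\Omega_2$ and $\Omega_3$ the extra $y$-integration by parts against $|z|^{-2l}$ with $|z|\gtrsim A^{\delta}$ supplies decay of order $A^{-2l\delta}$ to absorb the $x$-derivative losses. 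Your twin-identity scheme has no analogue of this $A^{\delta}$-balancing, which is why the term with no $\lambda$-derivatives on $\sigma$ spoils the bound once $\delta>0$.
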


\begin{theorem}[Asymptotic expansion]\label{symbolic2}

If $0\leq \delta<\rho\leq 1$ and $\sigma\in BS_{\rho, \delta}^m,$
then $\sigma^{*1}$ and $\sigma^{*2}$ have asymptotic expansions
\begin{equation*}
\sigma^{*1}\sim\sum_{\al}
\frac{i^{\abs{\alpha}}}{\alpha!}\partial_x^\alpha\partial_\xi^\alpha
(\sigma(x,-\xi-\eta,\eta))
\end{equation*}
and
\begin{equation*}
\sigma^{*2}\sim\sum_{\al}
\frac{i^{\abs{\alpha}}}{\alpha!}\partial_x^\alpha\partial_\eta^\alpha
(\sigma(x,\xi,-\xi-\eta)).
\end{equation*}
More precisely, if $N\in \bn$ then
\begin{equation}\label{expansion1}
\sigma^{*1}-\sum_{\abs{\al}<N}\frac{i^{\abs{\alpha}}}{\alpha!}\partial_x^\alpha\partial_\xi^\alpha
(\sigma(x,-\xi-\eta,\eta))\in BS^{m+(\delta-\rho)N}_{\rho, \delta}
\end{equation}
and
\begin{equation}\label{expansion2}
\sigma^{*2}-\sum_{\abs{\al}<N}
\frac{i^{\abs{\alpha}}}{\alpha!}\partial_x^\alpha\partial_\eta^\alpha
(\sigma(x,\xi,-\xi-\eta))  \in BS^{m+(\delta-\rho)N}_{\rho, \delta}.
\end{equation}
\end{theorem}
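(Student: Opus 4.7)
The plan is to represent $\sigma^{*1}$ as an explicit oscillatory integral, extract the principal terms by Taylor expansion in an auxiliary variable, and bound the resulting Taylor remainder as a symbol in the requisite H\"ormander class via integration-by-parts, following the scheme for linear pseudodifferential operators in \cite{Kumano-go}. The treatment of $\sigma^{*2}$ is obtained symmetrically by swapping the roles of $\xi$ and $\eta$, so I focus on $\sigma^{*1}$.

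First, I would derive an explicit formula for $\sigma^{*1}$. Starting from the duality $\langle T_\sigma(f,g),h\rangle = \langle T_{\sigma^{*1}}(h,g),f\rangle$ with $f,g,h\in\mathcal S$, substituting the Fourier representations of $T_\sigma$ and $T_{\sigma^{*1}}$, integrating out the variable dual to $g$ against $\widehat g$, matching coefficients of $e^{ix\cdot(\xi+\eta)}\widehat{h}(\xi)\widehat{g}(\eta)$, and then changing variables $x\mapsto x+y$, $\xi\mapsto \zeta-\xi-\eta$, one arrives at
\begin{equation*}
\sigma^{*1}(x,\xi,\eta) = (2\pi)^{-n}\iint \sigma(x+y,\zeta-\xi-\eta,\eta)\,e^{iy\cdot\zeta}\,dy\,d\zeta,
\end{equation*}
to be interpreted as an oscillatory integral (for instance via a Friedrichs-type regularization $\chi(\ve y,\ve\zeta)\to 1$ as $\ve\to 0^+$). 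Theorem \ref{symbolic} already guarantees that $\sigma^{*1}\in BS^m_{\rho,\delta}$, so this object is well-defined.

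Next, I would apply Taylor's formula in $\zeta$ to $\sigma(x+y,\zeta-\xi-\eta,\eta)$ about $\zeta=0$ to order $N$. For each $|\alpha|<N$ the polynomial term carries the factor $\zeta^\alpha/\alpha!$ multiplied by $\partial_\xi^\alpha\sigma(x+y,-\xi-\eta,\eta)$; rewriting $\zeta^\alpha e^{iy\cdot\zeta}=(-i)^{|\alpha|}\partial_y^\alpha e^{iy\cdot\zeta}$, integrating by parts in $y$ to transfer $\partial_y^\alpha$ onto $\sigma$ (it becomes $\partial_x^\alpha$ acting on the first slot), and then using $(2\pi)^{-n}\int e^{iy\cdot\zeta}\,d\zeta = \delta_0(y)$ followed by evaluation at $y=0$, yields exactly $\frac{i^{|\alpha|}}{\alpha!}\partial_x^\alpha\partial_\xi^\alpha\sigma(x,-\xi-\eta,\eta)$, the $\alpha$-th term in \eqref{expansion1}.

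The main obstacle is to show that the Taylor remainder
\begin{equation*}
r_N(x,\xi,\eta) = \frac{1}{(2\pi)^n}\sum_{|\alpha|=N}\frac{N}{\alpha!}\iint\int_0^1(1-t)^{N-1}\zeta^\alpha \partial_\xi^\alpha\sigma(x+y,-\xi-\eta+t\zeta,\eta)\,e^{iy\cdot\zeta}\,dt\,dy\,d\zeta
\end{equation*}
belongs to $BS^{m+(\delta-\rho)N}_{\rho,\delta}$. I would split the $\zeta$-integration into the region $|\zeta|\le c(1+|\xi|+|\eta|)^\rho$ (where $1+|{-\xi-\eta+t\zeta}|+|\eta|\sim 1+|\xi|+|\eta|$, so the H\"ormander estimate on $\partial_\xi^\alpha\sigma$ immediately contributes the decay $(1+|\xi|+|\eta|)^{m-\rho N}$) and its complement. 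Integration by parts in $y$ via $e^{iy\cdot\zeta}=(1+|\zeta|^2)^{-M}(1-\Delta_y)^M e^{iy\cdot\zeta}$ handles the first region at the cost of $x$-derivatives, each contributing only a factor $(1+|\xi|+|\eta|)^\delta$; integration by parts in $\zeta$ via $e^{iy\cdot\zeta}=(1+|y|^2)^{-M}(1-\Delta_\zeta)^M e^{iy\cdot\zeta}$ handles the complement, transferring the decay onto $y$ at the cost of $\zeta$-derivatives of the amplitude, each contributing $(1+|\xi|+|\eta|)^{-\rho}$. Applying $\partial_x^\mu\partial_\xi^\nu\partial_\eta^\lambda$ to $r_N$ under the integral via Leibniz, and choosing $M$ sufficiently large, yields the pointwise bound
\begin{equation*}
|\partial_x^\mu\partial_\xi^\nu\partial_\eta^\lambda r_N(x,\xi,\eta)|\lesssim (1+|\xi|+|\eta|)^{m+(\delta-\rho)N+\delta|\mu|-\rho(|\nu|+|\lambda|)},
\end{equation*}
which is the H\"ormander estimate placing $r_N$ in $BS^{m+(\delta-\rho)N}_{\rho,\delta}$. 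The strict inequality $\delta<\rho$ is used precisely here: it ensures $(\delta-\rho)N\to-\infty$ so that the series is genuinely asymptotic, and it also ensures that the $\xi$-gain $\rho$ from each $\zeta$-integration by parts strictly dominates the $x$-cost $\delta$ from each $y$-integration by parts.
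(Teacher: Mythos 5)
Your overall plan mirrors the paper's: write $\sigma^{*1}$ as the oscillatory integral $\iint c(x+y,\xi+z,\eta)e^{-iz\cdot y}\,dy\,dz$ with $c(y,\xi,\eta)=\sigma(y,-\xi-\eta,\eta)$, Taylor-expand in the frequency shift to produce the terms $\frac{i^{|\alpha|}}{\alpha!}\partial_x^\alpha\partial_\xi^\alpha c$, and then estimate the remainder by regularized integration by parts and a region decomposition, \`a la Kumano-go. However, the remainder estimate as sketched has several genuine gaps. First, on your inner region $|\zeta|\le cA^\rho$ (with $A=1+|\xi|+|\eta|$), the claimed decay $A^{m-\rho N}$ from $\partial_\xi^\alpha\sigma$ ignores the accompanying factor $\zeta^\alpha$ in the remainder, and $|\zeta|^N\le c^N A^{\rho N}$ exactly cancels the $A^{-\rho N}$; one does not get the desired gain from the H\"ormander estimate alone on this region. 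Second, your integration by parts in $y$ (introducing $(1+|\zeta|^2)^{-M}$) moves derivatives onto the amplitude but produces no decay in $y$, so the $y$-integral on the inner region is never made absolutely convergent. Third, on the complement you rely on $1+|{-\xi-\eta+t\zeta}|+|\eta|\sim A$, but this equivalence fails as soon as $|\zeta|\gtrsim A$; the tail $|\zeta|>A/2$ needs a separate estimate (using $1+|\xi+t\zeta|+|\eta|\lesssim|\zeta|$ and extra integration by parts in $y$ to get $|\zeta|^{-2l}$), which you omit.

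The paper addresses all three points at once via a different decomposition and a weighted integration by parts. It first writes $e^{-iz\cdot y}=(1+A^{2\delta}|y|^2)^{-l_0}(1+A^{2\delta}(-\Delta_z))^{l_0}e^{-iz\cdot y}$, so that every term carries the factor $(1+A^{2\delta}|y|^2)^{-l_0}$; this simultaneously makes the $y$-integral absolutely convergent and arranges that each induced $\xi$-derivative of $c$ costs only $A^{\delta-\rho}\le 1$, and the $y$-integral contributes precisely the factor $A^{-\delta n}$ needed in the bookkeeping. It then splits $z$ into \emph{three} regions $|z|\le A^\delta/2$, $A^\delta/2\le|z|\le A/2$, $|z|\ge A/2$: on the first no further integration by parts is needed (the $z$-volume $\sim A^{\delta n}$ cancels the $A^{-\delta n}$ from the $y$-integral); on the middle and outer regions it applies $(-\Delta_y)^l/|z|^{2l}$ to gain decay in $z$, with $l$ chosen large using $\delta<1$ on the outer region where the $A$-equivalence fails. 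Your two-region split with threshold $A^\rho$ and unweighted $(1-\Delta)^M$ does not reproduce the $A^{\pm\delta n}$ cancellation and leaves the tail unbounded; repairing it essentially forces you into the weighted three-region scheme the paper uses.
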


In relation to asymptotic expansions we also prove the following two theorems.

\begin{theorem}\label{symbolic3}
Assume that $a_j\in BS_{\rho, \delta}^{m_j}, j\geq 0$ and
$m_j\searrow -\infty$ as $j\rightarrow\infty$. Then, there exists
$a\in BS_{\rho, \delta}^{m_0}$ such that $a\sim
\displaystyle\sum_{j=0}^\infty a_j$. Moreover, if
$$b\in BS_{\rho, \delta}^\infty=\mathop{\cup}_{m} BS_{\rho, \delta}^m\, \text{ and }\,
b\sim\displaystyle\sum_{j=0}^\infty a_j,$$ then
$$a-b\in BS_{\rho, \delta}^{-\infty}=\mathop{\cap}_{m} BS_{\rho, \delta}^m.$$
\end{theorem}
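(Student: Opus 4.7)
The plan is to implement the Borel-type summation construction familiar from the linear theory. Fix a cutoff $\chi\in C^\infty(\rtn)$ with $\chi(\xi,\eta)=0$ for $|\xi|+|\eta|\le 1$ and $\chi(\xi,\eta)=1$ for $|\xi|+|\eta|\ge 2$, and, for a sequence $t_j\nearrow\infty$ to be chosen recursively, set $\chi_j(\xi,\eta)=\chi(\xi/t_j,\eta/t_j)$. Define the candidate symbol
\begin{equation*}
a(x,\xi,\eta):=a_0(x,\xi,\eta)+\sum_{j=1}^\infty \chi_j(\xi,\eta)\,a_j(x,\xi,\eta).
\end{equation*}
Each truncated summand vanishes for $|\xi|+|\eta|\le t_j$, and the free parameters $t_j$ will be used to force absolute convergence of the series in every $BS^{m_0}_{\rho,\delta}$ seminorm.

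The main step is a Leibniz estimate for $\partial_x^\alpha\partial_\xi^\beta\partial_\eta^\gamma(\chi_j a_j)$. Derivatives of $\chi_j$ of positive order $k$ are of size $t_j^{-k}$ and supported in an annulus $t_j\le|\xi|+|\eta|\le 2t_j$; since $\rho\le 1$, the scaling loss $t_j^{-k}$ is absorbed by $(1+|\xi|+|\eta|)^{-\rho k}$ on that support. Combining this with the symbol bounds for $a_j$ and the elementary observation that, on $\supp\chi_j$,
\begin{equation*}
(1+|\xi|+|\eta|)^{m_j}\le (1+t_j)^{m_j-m_0}(1+|\xi|+|\eta|)^{m_0},
\end{equation*}
yields
\begin{equation*}
|\partial_x^\alpha\partial_\xi^\beta\partial_\eta^\gamma(\chi_j a_j)|\le C_{\alpha\beta\gamma}(1+t_j)^{m_j-m_0}(1+|\xi|+|\eta|)^{m_0+\delta|\alpha|-\rho(|\beta|+|\gamma|)}.
\end{equation*}
I would then pick $t_j$ so large that for every multi-index with $|\alpha|+|\beta|+|\gamma|\le j$ the prefactor $C_{\alpha\beta\gamma}(1+t_j)^{m_j-m_0}$ is at most $2^{-j}$; this is possible because $m_j\to-\infty$, and a diagonal argument then gives $a\in BS^{m_0}_{\rho,\delta}$.

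For the asymptotic expansion, given $N\in\bn$ I would split
\begin{equation*}
a-\sum_{j=0}^{N-1}a_j=\sum_{j=1}^{N-1}(\chi_j-1)a_j+\sum_{j\ge N}\chi_j a_j.
\end{equation*}
Each $(\chi_j-1)a_j$ is smooth and compactly supported in $(\xi,\eta)$, hence lies in $BS^{-\infty}_{\rho,\delta}$, while the tail is handled by repeating the earlier estimate with $m_0$ replaced by $m_N$, yielding membership in $BS^{m_N}_{\rho,\delta}$. Uniqueness modulo $BS^{-\infty}_{\rho,\delta}$ then follows at once: if $b\sim\sum a_j$ as well, then $a-b\in BS^{m_N}_{\rho,\delta}$ for every $N$, so $a-b\in\bigcap_m BS^m_{\rho,\delta}$. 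The one delicate step is the Leibniz bookkeeping in the derivative estimate; the hypothesis $\rho\le 1$ together with the localization of the derivatives of $\chi_j$ to $\{|\xi|+|\eta|\sim t_j\}$ is precisely what lets the scaling losses from the cutoff be absorbed into the H\"ormander-class decay.
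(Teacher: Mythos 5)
Your construction coincides with the paper's: there too one defines $a=\sum_{j\ge 0}\psi(\epsilon_j\xi,\epsilon_j\eta)\,a_j$ with a cutoff $\psi$ vanishing near the origin (your $\chi_j$ with $t_j=\epsilon_j^{-1}$), the $\epsilon_j$ chosen recursively so that for $|\alpha|+|\beta|+|\gamma|\le j$ the $j$-th summand is bounded by $2^{-j}(1+|\xi|+|\eta|)^{m_0+\delta|\alpha|-\rho(|\beta|+|\gamma|)}$, and the finite-part-plus-summable-tail split and the uniqueness argument are verbatim what you propose. The only difference is cosmetic bookkeeping of the damping: the paper gains the small factor $\epsilon_j$ by raising the order from $m_j$ to $m_j+1$ on the support of $\psi(\epsilon_j\cdot)$, whereas you carry the prefactor $(1+t_j)^{m_j-m_0}$; both versions require relegating the finitely many small $j$ (where $m_j+1>m_0$, resp.\ $m_j=m_0$, so the damping cannot be forced below $2^{-j}$) to the finite part of the split, and the role of $\rho\le 1$ in absorbing the cutoff derivatives is exactly the paper's property (e), that $\{\psi(\epsilon\xi,\epsilon\eta)\}$ is bounded in $BS^0_{1,0}$.
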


\begin{theorem}\label{symbolic4}
Assume that $a_j\in BS_{\rho, \delta}^{m_j}, j\geq 0$ and
$m_j\searrow -\infty$ as $j\rightarrow\infty$. Let $a\in C^\infty
(\brn\times\brn\times\brn)$ be such that \bq\label{egy}
|\partial_x^\al\partial_\xi^\be\partial_\eta^\ga a (x, \xi,
\eta)|\leq C_{\al\be\ga}(1+|\xi|+|\eta|)^{\mu}, \eq for some
positive constants $C_{\al\be\ga}$ and $\mu=\mu (\al, \be, \ga)$. If there
exist $\mu_N\rightarrow \infty$ such that \bq\label{ketto} |a(x,
\xi, \eta)-\sum_{j=0}^{N} a_j(x, \xi, \eta)|\leq C_N
(1+|\xi|+|\eta|)^{-\mu_N}, \eq then $a\in BS_{\rho, \delta}^{m_0}$
and $a\sim\displaystyle\sum_{j=0}^\infty a_j.$
\end{theorem}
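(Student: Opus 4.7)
The plan is to combine Theorem \ref{symbolic3} with a Landau--Kolmogorov type interpolation. By Theorem \ref{symbolic3} applied to the sequence $\{a_j\}$, there exists $b\in BS_{\rho,\delta}^{m_0}$ with $b\sim \sum_{j=0}^\infty a_j$. Set $r := a-b$. If I can show $r\in BS_{\rho,\delta}^{-\infty}$, then $a = b+r \in BS_{\rho,\delta}^{m_0}$, and for every $N\ge 1$
\[
a-\sum_{j=0}^{N-1} a_j \;=\; \Big(b-\sum_{j=0}^{N-1} a_j\Big) + r \;\in\; BS_{\rho,\delta}^{m_N},
\]
so the expansion $a\sim\sum_{j=0}^\infty a_j$ follows automatically. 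Thus the whole theorem reduces to proving $r\in BS_{\rho,\delta}^{-\infty}$.

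First I would collect two estimates on $r$. Hypothesis \eqref{ketto} together with the fact (guaranteed by Theorem \ref{symbolic3}) that $b-\sum_{j=0}^{N}a_j\in BS_{\rho,\delta}^{m_{N+1}}$ yields, for every $N\in\bn$,
\[
|r(x,\xi,\eta)|\le C_N\,(1+|\xi|+|\eta|)^{-\nu_N}, \qquad \nu_N\to\infty,
\]
uniformly in $x$. On the other hand, the a priori polynomial growth \eqref{egy} and the symbol estimates for $b$ combine to give, for each triple of multi-indices $(\al,\be,\ga)$,
\[
|\partial_x^\al\partial_\xi^\be\partial_\eta^\ga r(x,\xi,\eta)|\le C_{\al\be\ga}\,(1+|\xi|+|\eta|)^{\mu(\al,\be,\ga)}.
\]
So $r$ is smooth, with polynomial-growth derivatives of every order, while the function itself decays faster than any polynomial. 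What is left is to promote this decay to all derivatives, since the defining estimates of $BS_{\rho,\delta}^{-\infty}$ require that for every $(\al,\be,\ga)$ and every $M>0$,
\[
|\partial_x^\al\partial_\xi^\be\partial_\eta^\ga r(x,\xi,\eta)|\le C_{\al\be\ga,M}\,(1+|\xi|+|\eta|)^{-M}.
\]

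The hard part is this last step. The standard tool is a convexity (Landau--Kolmogorov / Glaeser) inequality: if a smooth function $u$ on a ball satisfies $|u|\le A$ and $|D^k u|\le B$ on the ball, then $|D^j u|\le C A^{1-j/k} B^{j/k}$ at the center, provided the radius is at least $(A/B)^{1/k}$. I would apply this on the region $\{|x-x_0|\le 1\}\times\{|(\xi,\eta)-(\xi_0,\eta_0)|\le R_0/2\}$, where $R_0:=1+|\xi_0|+|\eta_0|$, noting that $1+|\xi|+|\eta|\sim R_0$ there. The size and derivative bounds become $A=CR_0^{-\nu_N}$ and $B_k=CR_0^{M_k}$, so the convexity inequality yields a bound of order $R_0^{-\nu_N(1-j/k)+M_k\,j/k}$ for derivatives of total order $j$. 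For any prescribed $M$ and any fixed $j$, I would pick $k>j$ and then choose $N$ so that $\nu_N(1-j/k)-M_k\,j/k\ge M$, which is possible because $\nu_N\to\infty$. This handles the $(\xi,\eta)$-derivatives; the $x$-derivatives are treated identically on a fixed-radius $x$-ball (using that all $r$-estimates are uniform in $x$), and mixed $\partial_x^\al\partial_\xi^\be\partial_\eta^\ga r$ are handled by the multivariable version of the same inequality (or by iterating the one-dimensional convexity in each direction). This gives $r\in BS_{\rho,\delta}^{-\infty}$ and closes the proof.
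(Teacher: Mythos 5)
Your proposal is correct and follows essentially the same route as the paper: apply Theorem~\ref{symbolic3} to get $b$ with the desired expansion, reduce to showing $r=a-b\in BS_{\rho,\delta}^{-\infty}$, and upgrade the rapid sup-norm decay of $r$ to its derivatives via a Landau--Kolmogorov/Glaeser type convexity inequality. The only cosmetic difference is that you invoke the general $(j,k)$ interpolation inequality on neighborhoods whose radius scales with $1+|\xi|+|\eta|$, whereas the paper iterates the $(1,2)$ case (quoted from Taylor's book) on fixed-size compact neighborhoods; both choices close the argument in the same way.
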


The continuity on Lebesgue spaces of bilinear pseudodifferential
operators with symbols in the class $BS^0_{1,\delta}$ with $0\leq
\delta < 1$ has been intensely addressed in the literature.
 It is nowadays a well-known fact that the
bilinear kernels associated to bilinear operators with symbols in
$BS^0_{1,\delta}$, $0 \leq \delta < 1$, are bilinear
Calder\'on-Zygmund operators in the sense of Grafakos and Torres
\cite{gt1}.  Recall the
 following  result (\cite[Corollary~1]{gt1}) which is an application of the bilinear $T1$-Theorem therein:

{\it
If $T$ and its transposes, $T^{*1}$ and  $T ^{*2}$, have symbols in
$BS_{1, 1}^0$, then they can be extended as
bounded operators from $L^p\times L^q$
into $L^r$ for $1<p,q<\infty$ and $1/p+1/q=1/r$.}

As mentioned in the introduction, and since $BS^0_{1,\delta} \subset BS^0_{1,1}$,  we can directly combine this result with Theorem ~\ref{symbolic} to recover the following optimal version of a known fact.
\begin{corollary}\label{corollary2}
If $\sigma$ is a symbol
in $BS_{1,\delta}^0$, $0\leq\delta<1,$ then $T_\sigma$ has a bounded extension from
$L^p\times L^q$ into $L^r,$ for all $1<p,q<\infty, 1/p+1/q=1/r$.
\end{corollary}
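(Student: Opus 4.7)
The plan is to assemble the corollary as a direct consequence of Theorem~\ref{symbolic} and the bilinear $T1$-theorem result of Grafakos--Torres quoted immediately above the statement. The idea is simply that, once we know the class $BS^0_{1,\delta}$ is closed under the two transpositions, the hypotheses of the $T1$-based criterion are automatically met.

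First I would observe the trivial inclusion $BS^0_{1,\delta} \subset BS^0_{1,1}$, which is immediate from the defining estimate \eqref{hormander}: increasing $\delta$ only weakens the bound on $x$-derivatives. Hence if $\sigma \in BS^0_{1,\delta}$ with $0\le\delta<1$, then in particular $\sigma \in BS^0_{1,1}$.

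Next, since $0 \le \delta < 1 = \rho$, Theorem~\ref{symbolic} applies and gives that $T_\sigma^{*1} = T_{\sigma^{*1}}$ and $T_\sigma^{*2} = T_{\sigma^{*2}}$ with $\sigma^{*1}, \sigma^{*2} \in BS^0_{1,\delta} \subset BS^0_{1,1}$. Thus the operator $T = T_\sigma$ and both of its transposes have symbols in $BS^0_{1,1}$, which is precisely the hypothesis of the quoted Corollary~1 of \cite{gt1}. Applying that result yields the bounded extension $T_\sigma : L^p \times L^q \to L^r$ for all $1<p,q<\infty$ with $1/p+1/q=1/r$.

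There is essentially no obstacle here: all the substantive work has already been carried out, either in Theorem~\ref{symbolic} (to produce the transpose symbols in the same H\"ormander class) or in the bilinear Calder\'on--Zygmund machinery of \cite{gt1} (to turn the $BS^0_{1,1}$ membership of $T$, $T^{*1}$, $T^{*2}$ into $L^p\times L^q \to L^r$ boundedness). The only point that deserves emphasis is that the conclusion is \emph{optimal} in the range of exponents; the novelty relative to earlier $L^p$-boundedness proofs for $BS^0_{1,\delta}$ (e.g., via Littlewood--Paley decompositions as in \cite{cm1} or \cite{b}) is that here the Lebesgue estimates are obtained by symbolic calculus together with a single application of the bilinear $T1$-theorem, so Theorem~\ref{symbolic} is doing all of the real work.
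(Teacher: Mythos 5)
Your proof is correct and follows exactly the paper's intended argument: apply Theorem~\ref{symbolic} to place $\sigma^{*1},\sigma^{*2}$ in $BS^0_{1,\delta}\subset BS^0_{1,1}$, then invoke Corollary~1 of \cite{gt1}. No differences to note.
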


\section{Proofs of Theorem \ref{symbolic} and Theorem \ref{symbolic2}}\label{secc:sym12}

In the following, we assume that the symbol $\sigma$ has compact
support (in all three variables $x$, $\xi$, and $\eta$) so that the
calculations in the proofs of Theorem \ref{symbolic} and Theorem~
\ref{symbolic2}  are properly justified. All estimates are obtained
with constants independent of the support of $\sigma$ and an
approximation argument can be used to obtain the results for symbols
that do not have compact support; see \cite{bt1} for further details regarding
such an approximation argument.

We restrict the proofs of Theorem \ref{symbolic} and Theorem~
\ref{symbolic2} to the first transpose of $T_\sigma$ ($j=1$).
As in \cite{bt1}, we rewrite
$T_\sigma^{*1}$ as a compound operator. We have
$$T^{*1}(h, g)(x)=\int_y\int_\eta\int_\xi c(y, \xi, \eta)h(y)\widehat g(\eta)e^{-i(y-x)\cdot\xi}e^{ix\cdot\eta}d\xi d\eta dy,$$
where
$$c(y, \xi, \eta)=\sigma (y, -\xi-\eta, \eta).$$

Straightforward calculations show that $c$ satisfies the same
differential inequalities as $\sigma$. Indeed, by the Leibniz rule
we can write
\begin{align}\begin{split}\label{cesti}
|\partial_y^\al\partial_\xi^\be\partial_\eta^\ga c(y, \xi,
\eta)|&\lesssim \sum_{|\ga_1|+|\ga_2|=|\ga|}
|\partial_y^\al\partial_\xi^\be\partial_{\eta2}^{\ga_1}\partial_{\eta3}^{\ga_2}\sigma (y, -\xi-\eta, \eta)|\\
&\lesssim \sum (1+|\xi+\eta|+|\eta|)^{m+\delta|\al|-\rho (|\be|+|\ga_1|+|\ga_2|)}\\
&\lesssim (1+|\xi|+|\eta|)^{m+\delta |\al|-\rho (|\be|+|\ga|)},
\end{split}
\end{align}
with constants of the form $\sum_{|\ga_1|+|\ga_2|=|\ga|}
C_{\al\be\ga_j} 2^{m+\delta|\al|+\rho (|\be|+|\ga|)}.$

By appropriately changing variables of integration the symbol of $T^{*1}$ is given in terms of $c$ by
the following expressin:
\begin{equation}\label{arepresentation}
a(x, \xi, \eta)=\int\int c(x+y, z+\xi, \eta)e^{-iz\cdot y}\, dydz.
\end{equation}

\bigskip

\begin{proof}[Proof of Theorem~\ref{symbolic}]

We will use the representation \eqref{arepresentation} of $a$ to
show that $a\in  BS_{\rho, \delta}^m.$ By \eqref{cesti} and  since
$\partial_x^\alpha\partial_\xi^\beta\partial_\eta^\gamma a(x, \xi,
\eta)=\int \int
\partial_x^\alpha\partial_\xi^\beta\partial_\eta^\gamma c(x+y,
z+\xi, \eta) e^{-iz\cdot y}\,dydz,$ it is enough to work with
$\alpha=\beta=\gamma=0.$ Our techniques are inspired in part by ideas in \cite[Lemma 2.4, page
69]{Kumano-go}.

In the following, fix $\xi\in\brn,$  $\eta\in\brn,$ and set
$A:=1+|{\xi}|+|{\eta}|.$ We have to prove that
\[
\abs{a(x,\xi,\eta)}\lesssim A^m
\]
with a constant independent of the support of $\sigma.$

 Let $l_0\in\bn,$ $2l_0>n.$ Writing
\begin{equation*}
e^{-iz\cdot
y}=(1+A^{2\delta}\abs{y}^2)^{-l_0}(1+A^{2\delta}(-\Delta_z))^{l_0}
e^{-iz\cdot y},
\end{equation*}
integration by parts gives
\begin{equation}\label{awithq}
a(x,\xi,\eta)=\int\int q(x,y,z,\xi,\eta)e^{-iz\cdot y}\,dydz,
\end{equation}
where
\[q(x,y,z,\xi,\eta)=\frac{(1+A^{2\delta}(-\Delta_z))^{l_0}c(x+y,z+\xi,\eta)}{(1+A^{2\delta}\abs{y}^2)^{l_0}}.\]

We now estimate $(-\Delta_y)^lq$ for $l\in\bn.$
\begin{align}
&(-\Delta_y)^lq=\mathop{\sum_{\abs{\alpha}=2l}}_{\alpha_i
\text{ even}} C_\alpha\,\partial_y^\alpha q(x, y, z, \xi, \eta) \label{lappower}\\
&= \mathop{\sum_{\abs{\alpha}=2l}}_{\alpha_i \text{
even}}\sum_{{\beta}\le \alpha}
C_{\alpha\beta}\,\partial_y^{\beta}\left((1+A^{2\delta}\abs{y}^2)^{-l_0}\right)\partial_y^{\alpha-\beta}\left(
(1+A^{2\delta}(-\Delta_z))^{l_0}c(x+y,z+\xi,\eta)\right).\nonumber
\end{align}
Note that
\begin{equation}\label{esti1}
\abs{\partial_y^{\beta}\left((1+A^{2\delta}\abs{y}^2)^{-l_0}\right)}\le
C_{\beta
l_0}\,A^{\delta\abs{\beta}}\left(1+A^{2\delta}\abs{y}^2\right)^{-l_0}.
\end{equation}
Moreover, if $P_{l_0}=\{\gamma=(\gamma_1,\cdots,\gamma_n): \gamma_i
\text{ even and } \abs{\gamma}=2j,\,j=0,\cdots,l_0\,\},$ then
\begin{equation*}
(1+A^{2\delta}(-\Delta_z))^{l_0}c(x+y,z+\xi,\eta)=\sum_{{\gamma}\in
P_{l_0}} C_{\gamma}
A^{\delta\abs{\gamma}}\partial_\xi^{\gamma}c(x+y,z+\xi,\eta),
\end{equation*}
and therefore
\begin{align}
|\partial_y^{\alpha-\beta}&\left((1+A^{2\delta}(-\Delta_z))^{l_0}c(x+y,z+\xi,\eta)\right)|\nonumber\\&\le
\sum_{\gamma\in P_{l_0}}C_{\gamma\alpha\beta}
A^{\delta\abs{\gamma}}(1+\abs{z+\xi}+\abs{\eta})^{m+\delta(\abs{\alpha}-\abs{\beta})-\rho\abs{\gamma}}.\label{esti2}
\end{align}

From \eqref{lappower}, \eqref{esti1} and \eqref{esti2}, we get
\begin{align}
&\abs{(-\Delta_y)^lq}\label{lappoweresti}\lesssim\\
&\left(1+A^{2\delta}\abs{y}^2\right)^{-l_0}\mathop{\sum_{\abs{\alpha}=2l}}_{\alpha_i
\text{ even}}\sum_{{\beta}\le \alpha} C_{\alpha\beta
l_0}\,A^{\delta\abs{\beta}}\sum_{\gamma\in
P_{l_0}}C_{\gamma\alpha\beta}
A^{\delta\abs{\gamma}}(1+\abs{z+\xi}+\abs{\eta})^{m+\delta(\abs{\alpha}-\abs{\beta})-\rho\abs{\gamma}}.\nonumber
\end{align}
Define the sets
\[
\Omega_1=\{z:\abs{z}\le {\textstyle \frac{A^{\delta}}{2}}\}, \quad
\Omega_2=\{z:{\textstyle\frac{A^{\delta}}{2}}\le \abs{z}\le
{\textstyle\frac{A}{2}}\},\quad \Omega_3=\{z:\abs{z}\ge
{\textstyle\frac{A}{2}}\}.
\]
We then have
\[
a(x,\xi,\eta)=\int_{\Omega_1}\int_y\cdots+\int_{\Omega_2}\int_y\cdots+\int_{\Omega_3}\int_y\cdots:=I_1+I_2+I_3.
\]
Note that
\begin{equation}\label{AOmega12}
\frac{1}{2} A\le 1+\abs{z+\xi}+\abs{\eta}\le \frac{3}{2} A,\qquad
z\in \Omega_1\cup\Omega_2.
\end{equation}
and
\begin{equation}\label{AOmega3}
 1+\abs{z+\xi}+\abs{\eta}\le A+\abs{z}\le 3\abs{z},\qquad z\in \Omega_3.
\end{equation}

{\bf Estimation for $I_1.$} The estimate \eqref{lappoweresti} with
$l=0,$ \eqref{AOmega12}, and $\delta-\rho\le0$ give, for $z\in
\Omega_1,$
\begin{align*}
\abs{q}&\le (1+A^{2\delta}\abs{y}^2)^{-l_0} \sum_{\gamma\in
P_{l_0}}{C_{\gamma}}
A^{\delta\abs{\gamma}}(1+\abs{z+\xi}+\abs{\eta})^{m-\rho\abs{\gamma}}\\&\le
(1+A^{2\delta}\abs{y}^2)^{-l_0}\sum_{\gamma\in P_{l_0}}C_{\gamma}
A^{m+(\delta-\rho)\abs{\gamma}}\\&\lesssim
(1+A^{2\delta}\abs{y}^2)^{-l_0}{A^m}.
\end{align*}
Therefore, since $2l_0>n,$
\begin{align*}
\abs{I_1}\lesssim
A^{m}\int_{\Omega_1}\int_y\frac{1}{(1+A^{2\delta}\abs{y}^2)^{l_0}}\,dydz\sim
A^{m}.
\end{align*}

{\bf Estimation for $I_2.$} Integration by parts gives
\begin{align*}
\int_y q(x,y,z,\xi,\eta)e^{-iz\cdot
y}\,dy&=\frac{1}{\abs{z}^{2l_0}}\int_y
q(x,y,z,\xi,\eta)(-\Delta_y)^{l_0}e^{-iz\cdot y}\,dy\\
&= \frac{1}{\abs{z}^{2l_0}}\int_y
(-\Delta_y)^{l_0}(q(x,y,z,\xi,\eta))e^{-iz\cdot y}\,dy.
\end{align*}
Using \eqref{lappoweresti} with $l=l_0,$  \eqref{AOmega12}, and
$\delta-\rho\le0$ we get, for $z\in \Omega_2,$
\begin{align*}
&\abs{(-\Delta_y)^{l_0}q}\\&\le
\left(1+A^{2\delta}\abs{y}^2\right)^{-l_0}
\mathop{\sum_{\abs{\alpha}=2{l_0}}}_{\alpha_i \text{
even}}\sum_{{\beta}\le \alpha} C_{\alpha\beta
l_0}\,A^{\delta\abs{\beta}} \sum_{\gamma\in
P_{l_0}}c_{\gamma\alpha\beta}
A^{\delta\abs{\gamma}}A^{m+\delta(\abs{\alpha}-\abs{\beta})-\rho\abs{\gamma}}\\
& \le \left(1+A^{2\delta}\abs{y}^2\right)^{-l_0}
\mathop{\sum_{\abs{\alpha}=2{l_0}}}_{\alpha_i \text{
even}}\sum_{{\beta}\le \alpha} C_{\alpha\beta l_0} \sum_{\gamma\in
P_{l_0}}c_{\gamma\alpha\beta}
A^{m+\delta\abs{\alpha}+(\delta-\rho)\abs{\gamma}}\\
&\lesssim
\frac{A^{m+2l_0\delta}}{\left(1+A^{2\delta}\abs{y}^2\right)^{l_0}}.
\end{align*}
Recalling that $2l_0>n,$ we get
\begin{align*}
\abs{I_2}&\le \int_{\Omega_2}\frac{1}{\abs{z}^{2l_0}}\int_y
\frac{A^{m+2l_0\delta}}{\left(1+A^{2\delta}\abs{y}^2\right)^{l_0}}\,dydz\\
&\lesssim A^{m+2l_0\delta-\delta n}\int_{\abs{z}\ge
\frac{A^\delta}{2}}\abs{z}^{-2l_0}\,dz\sim A^m.
\end{align*}

{\bf Estimation for $I_3.$} Let $l\in\bn$ to be chosen later. Again,
integration by parts gives
\begin{align*}
\int_y q(x,y,z,\xi,\eta)e^{-iz\cdot
y}\,dy&=\frac{1}{\abs{z}^{2l}}\int_y
q(x,y,z,\xi,\eta)(-\Delta_y)^{l}e^{-iz\cdot y}\,dy\\
&= \frac{1}{\abs{z}^{2l}}\int_y
(-\Delta_y)^{l}(q(x,y,z,\xi,\eta))e^{-iz\cdot y}\,dy.
\end{align*}
Using \eqref{lappoweresti} and \eqref{AOmega3}, and defining
$m_+=\max(0,m),$ we get, for $z\in \Omega_3,$
\begin{align*}
&\abs{(-\Delta_y)^lq}\\
&\lesssim
\left(1+A^{2\delta}\abs{y}^2\right)^{-l_0}\mathop{\sum_{\abs{\alpha}=2l}}_{\alpha_i
\text{ even}}\sum_{{\beta}\le \alpha} C_{\alpha\beta
l_0}\,A^{\delta\abs{\beta}} \sum_{\gamma\in
P_{l_0}}C_{\gamma\alpha\beta}
A^{\delta\abs{\gamma}}(1+\abs{z+\xi}+\abs{\eta})^{m+\delta(\abs{\alpha}-\abs{\beta})-\rho\abs{\gamma}}\\
&\lesssim
\left(1+A^{2\delta}\abs{y}^2\right)^{-l_0}\mathop{\sum_{\abs{\alpha}=2l}}_{\alpha_i
\text{ even}}\sum_{{\beta}\le \alpha} C_{\alpha\beta l_0}
\sum_{\gamma\in P_{l_0}}C_{\gamma,\alpha,\beta}
\abs{z}^{\delta(\abs{\beta}+\abs{\gamma})}\abs{z}^{m_++\delta(\abs{\alpha}-\abs{\beta})}\\
&\lesssim\left(1+A^{2\delta}\abs{y}^2\right)^{-l_0}
\abs{z}^{m_++\delta(2l+2l_0)}.
\end{align*}
We then have
\begin{align*}
\abs{I_3}&\lesssim
\int_{\Omega_3}\frac{1}{\abs{z}^{2l}}\int_y\left(1+A^{2\delta}\abs{y}^2\right)^{-l_0}
\abs{z}^{m_++\delta(2l+2l_0)}\,dydz\\
&\sim \int_{\abs{z}\ge
\frac{A}{2}}\abs{z}^{m_++2l_0\delta+2l(\delta-1)}\,dz\int_y\left(1+A^{2\delta}\abs{y}^2\right)^{-l_0}
\,dy\\
&\sim A^{-\delta n} \int_{\abs{z}\ge
\frac{A}{2}}\abs{z}^{m_++2l_0\delta+2l(\delta-1)}\,dz.
\end{align*}
We now choose $l\in\bn$ sufficiently large so that
\[m_++2l_0\delta+2l(\delta-1)<-n\quad \text{and}\quad -\delta
n+m_++2l_0\delta+2l(\delta-1)+n<m.\]
The existence of such an $l$ is guaranteed by the condition $0\le \delta<1$.

Finally,
\begin{align*}
\abs{I_3}&\lesssim A^{-\delta n+m_++2l_0\delta+2l(\delta-1)+n}\le
A^m.
\end{align*}
\end{proof}

\begin{proof}[Proof of Theorem~\ref{symbolic2}]
As in the proof of Theorem~\ref{symbolic} we use the representation
\eqref{arepresentation} for the symbol of $T^{*1}.$ Define
\[
a_\alpha(x,\xi,\eta):=\frac{i^{\abs{\alpha}}}{\alpha!}\partial_x^\alpha\partial_\xi^\alpha
c(x,\xi,\eta)=\frac{1}{\alpha!}\int\int\partial_\xi^\alpha
c(x+y,\xi,\eta)\,e^{-iz\cdot y}\,z^\alpha\,dydz.
\]
By the estimates \eqref{cesti}, $a_\al\in BS_{\rho,
\delta}^{m+(\delta-\rho)\abs{\al}}$ with constants independent of
the support of $\sigma$. We will show that
\begin{equation}\label{impob2}
\abs{\partial_x^{\al_1}\partial_\xi^{\al_2}\partial_\eta^{\al_3}(
a(x, \xi, \eta)-\sum_{|\al|<N} a_\al (x, \xi, \eta))}\leq C
(1+|\xi|+|\eta|)^{m+(\delta-\rho)N+\delta\abs{\al_1}-\rho(\abs{\al_2}+\abs{\al_3})},
\end{equation} where $C=C_{\al_1\al_2\al_3 N}$ is  independent of the
support of $\sigma.$ This then shows that
$a-\sum_{\abs{\al}<N}a_\al\in BS^{m+(\delta-\rho)N}_{\delta,\rho}$
and therefore we have \eqref{expansion1}. Now, by Taylor's theorem
\[
a(x, \xi, \eta)-\sum_{|\al|<N} a_\al (x, \xi,
\eta)=\sum_{\abs{\al}=N}\frac{1}{\al!}\int\int
\partial_{\xi}^\alpha c(x+y,\xi+tz,\eta)\,z^\alpha\,e^{-iz\cdot y}\,dydz,
\]
where $t\in (0,1)$ and $t=t(x,y,\xi,z,\eta).$ Note that, because of the estimates
\eqref{cesti}, it is enough to prove \eqref{impob2} for
$\al_1=\al_2=\al_3=0.$

Inequality \eqref{impob2}  follows  from  computations similar to
the ones in Theorem~\ref{symbolic}. We include them here for the
reader's convenience and for completeness.

Fix $N\in \bn_0$ and a multiindex $\al$ with $\abs{\al}=N.$ For
$l_0\in \bn,$ integration by parts gives
\begin{equation}\label{awithq2}
I_\al:=\int\int
\partial_{\xi}^\al c(x+y,\xi+tz,\eta)\,z^\al\,e^{-iz\cdot y}\,dydz=\int\int q(x,y,z,\xi,\eta)e^{-iz\cdot y}\,dydz,
\end{equation}
where
\[q(x,y,z,\xi,\eta)=\frac{(1+A^{2\delta}(-\Delta_z))^{l_0}(\partial_\xi^\al c(x+y,\xi+t z,\eta)\,z^\al)}{(1+A^{2\delta}\abs{y}^2)^{l_0}}.\]

We now estimate $(-\Delta_y)^lq$ for $l\in\bn.$
\begin{align}
&(-\Delta_y)^lq=\mathop{\sum_{\abs{\nu}=2l}}_{\nu_i
\text{ even}}C_\nu\,\partial_y^\nu q(x,y,z,\xi,\eta)\label{lappower2}\\
&= \mathop{\mathop{\sum_{\abs{\nu}=2l}}_{\nu_i \text{
even}}}_{{\beta}\le \nu}
C_{\nu\beta}\,\partial_y^{\beta}\left((1+A^{2\delta}\abs{y}^2)^{-l_0}\right)\partial_y^{\nu-\beta}\left(
(1+A^{2\delta}(-\Delta_z))^{l_0}(\partial_\xi^\al c(x+y,\xi+t
z,\eta)\,z^\al)\right).\nonumber
\end{align}

As before, if $P_{l_0}=\{\gamma=(\gamma_1,\cdots,\gamma_n): \gamma_i
\text{ even and } \abs{\gamma}=2j,\,j=1,\cdots,l_0\,\},$ then
\begin{align*}
&(1+A^{2\delta}(-\Delta_z))^{l_0}(\partial_\xi^\al c(x+y,\xi+t
z,\eta)\,z^\al)=\sum_{{\gamma}\in P_{l_0}} C_{\gamma}
A^{\delta\abs{\gamma}}\partial_z^{\gamma}(\partial_\xi^\al
c(x+y,\xi+t z,\eta)\,z^\al)\\
&=\mathop{\sum_{{\gamma}\in P_{l_0}}}_{\omega\le\gamma,
\omega\le\al} C_{\gamma\omega}
\,A^{\delta\abs{\gamma}}\,\partial_z^\omega
z^\al\,(\partial_\xi^{\al+\ga-\omega}
c)(x+y,\xi+tz,\eta)\,t^{\abs{\ga}-\abs{\omega}},
\end{align*}
and therefore
\begin{align}
|\partial_y^{\nu-\beta}&\left((1+A^{2\delta}(-\Delta_z))^{l_0}(\partial_\xi^\al
c(x+y,\xi+t z,\eta)\,z^\al)\right)|\nonumber\\&\le
\mathop{\sum_{\gamma\in P_{l_0}}}_{\omega\le\gamma,
\omega\le\al}C_{\ga\omega\nu\be}\,
A^{\delta\abs{\gamma}}\,\abs{z}^{N-\abs{\omega}}\,(1+\abs{\xi+tz}+\abs{\eta})^{m+\delta(\abs{\nu}-\abs{\beta})-\rho(N+\abs{\gamma}-\abs{\omega})}.\label{esti2_2}
\end{align}
From \eqref{lappower2}, \eqref{esti1} and \eqref{esti2_2}, we get
\begin{align}
&\abs{(-\Delta_y)^lq}\lesssim
\left(1+A^{2\delta}\abs{y}^2\right)^{-l_0}\label{lappoweresti2}\\
&\times\mathop{\mathop{\sum_{\abs{\nu}=2l,\nu_i \text{
even}}}_{\be\le\nu,\ga\in P_{l_0}}}_{\omega\le \ga,\omega\le \al}
C_{\ga\omega\nu\be l_0}\,
A^{\delta(\abs{\beta}+\abs{\gamma})}\,\,\abs{z}^{N-\abs{\omega}}\,(1+\abs{\xi+tz}+\abs{\eta})^{m+\delta(\abs{\nu}-\abs{\beta})-\rho(N+\abs{\gamma}-
\abs{\omega})}.\nonumber
\end{align}
Letting again
\[
\Omega_1=\{z:\abs{z}\le {\textstyle \frac{A^{\delta}}{2}}\}, \quad
\Omega_2=\{z:{\textstyle\frac{A^{\delta}}{2}}\le \abs{z}\le
{\textstyle\frac{A}{2}}\},\quad \Omega_3=\{z:\abs{z}\ge
{\textstyle\frac{A}{2}}\},
\]
we have
\[
I_\al=\int_{\Omega_1}\int_y\cdots+\int_{\Omega_2}\int_y\cdots+\int_{\Omega_3}\int_y\cdots:=
I_1+I_2+I_3.
\]
Note that
\begin{equation}\label{AOmega12_2}
\frac{1}{2} A\le 1+\abs{\xi+t z}+\abs{\eta}\le \frac{3}{2} A,\qquad
z\in \Omega_1\cup\Omega_2, \, t\in (0,1),
\end{equation}
and
\begin{equation}\label{AOmega3_2}
 1+\abs{\xi+t z}+\abs{\eta}\le A+\abs{z}\le 3\abs{z},\qquad z\in \Omega_3, \, t\in (0,1).
\end{equation}
The estimates in \eqref{impob2} for $\al_1=\al_2=\al_3=0$ follow if
we prove that
\[
\abs{I_i}\lesssim A^{m+{(\delta-\rho)}N},\qquad i=1,2,3.
\]

{\bf Estimation for $I_1.$} The estimate \eqref{lappoweresti2} with
$l=0,$ \eqref{AOmega12_2}, and $\delta-\rho< 0$ give, for $z\in
\Omega_1,$
\begin{align*}
\abs{q}&\le (1+A^{2\delta}\abs{y}^2)^{-l_0} \mathop{\sum_{\gamma\in
P_{l_0}}}_{\omega\le \gamma, \omega\le\al}{C_{\gamma\omega l_0}}\,
A^{m+(\delta-\rho)(\abs{\gamma}+N-\abs{\omega})}&\lesssim
\frac{A^{m+(\delta-\rho)N}}{(1+A^{2\delta}\abs{y}^2)^{l_0}}.
\end{align*}
Therefore, if we choose $2l_0>n,$ we get
\begin{align*}
\abs{I_1}\lesssim
A^{m+(\delta-\rho)N}\int_{\Omega_1}\int_y\frac{1}{(1+A^{2\delta}\abs{y}^2)^{l_0}}\,dydz\sim
A^{m+(\delta-\rho)N}.
\end{align*}

{\bf Estimation for $I_2.$} Integration by parts gives
\begin{align*}
\int_y q(x,y,z,\xi,\eta)e^{-iz\cdot
y}\,dy&=\frac{1}{\abs{z}^{2l_0}}\int_y
q(x,y,z,\xi,\eta)(-\Delta_y)^{l_0}e^{-iz\cdot y}\,dy\\
&= \frac{1}{\abs{z}^{2l_0}}\int_y
(-\Delta_y)^{l_0}(q(x,y,z,\xi,\eta))e^{-iz\cdot y}\,dy.
\end{align*}

Using \eqref{lappoweresti2} with $l=l_0,$  \eqref{AOmega12_2}, and
$\delta-\rho<0$ we get, for $z\in \Omega_2,$
\begin{align*}
&\abs{(-\Delta_y)^{l_0}q}\\&\le
\left(1+A^{2\delta}\abs{y}^2\right)^{-l_0}\mathop{\mathop{\sum_{\abs{\nu}=2l_0,\nu_i
\text{ even}}}_{\be\le\nu,\ga\in P_{l_0}}}_{\omega\le \ga,\omega\le
\al} C_{\ga\omega\nu\be l_0}\,
A^{\delta\abs{\gamma}}\,\abs{z}^{N-\abs{\omega}}\,A^{m+\delta2l_0-\rho(N+\abs{\gamma}-\abs{\omega})}.
\end{align*}
Choosing $l_0$ such that $2l_0>N+n,$
\begin{align*}
\abs{I_2}&\le A^{-n\delta}
\mathop{\mathop{\sum_{\abs{\nu}=2l_0,\nu_i \text{
even}}}_{\be\le\nu,\ga\in P_{l_0}}}_{\omega\le \ga,\omega\le \al}
C_{\ga\omega\nu\be l_0}\,
A^{\delta\abs{\gamma}}\,\,A^{m+\delta2l_0-\rho(N+\abs{\gamma}-\abs{\omega})}\int_{\abs{z}\ge
\frac{A^\delta}{2}}
\,\abs{z}^{N-\abs{\omega}-2l_0}dz\\
&\lesssim A^{m+(\delta-\rho)N}.
\end{align*}

{\bf Estimation for $I_3.$} Let $l\in\bn$ to be chosen later. Again,
integration by parts gives
\begin{align*}
\int_y q(x,y,z,\xi,\eta)e^{-iz\cdot
y}\,dy&=\frac{1}{\abs{z}^{2l}}\int_y
q(x,y,z,\xi,\eta)(-\Delta_y)^{l}e^{-iz\cdot y}\,dy\\
&= \frac{1}{\abs{z}^{2l}}\int_y
(-\Delta_y)^{l}(q(x,y,z,\xi,\eta))e^{-iz\cdot y}\,dy.
\end{align*}
Using \eqref{lappoweresti2} and \eqref{AOmega3_2}, and defining
$m_+=\max(0,m),$ we get, for $z\in \Omega_3,$
\begin{align*}
&\abs{(-\Delta_y)^lq} &\lesssim
\left(1+A^{2\delta}\abs{y}^2\right)^{-l_0}\mathop{\mathop{\sum_{\abs{\nu}=2l,\nu_i
\text{ even}}}_{\be\le\nu,\ga\in P_{l_0}}}_{\omega\le \ga,\omega\le
\al} C_{\ga\omega\nu\be l_0}\,
\abs{z}^{\delta(\abs{\beta}+2l_0)}\,\abs{z}^{N-\abs{\omega}}\,\abs{z}^{m_++\delta(2l-\abs{\beta})}
\end{align*}
We then have
\begin{align*}
\abs{I_3}&\lesssim A^{-\delta
n}\mathop{\mathop{\sum_{\abs{\nu}=2l,\nu_i \text{
even}}}_{\be\le\nu,\ga\in P_{l_0}}}_{\omega\le \ga,\omega\le \al}
C_{\ga\omega\nu\be l_0}\, \int_{\abs{z}\ge
\frac{A}{2}}\abs{z}^{m_++2l(\delta-1)+ 2l_0\delta+N}\,dz.
\end{align*}
Choosing $l\in\bn$ sufficiently large so that
\[m_++2l(\delta-1)+ 2l_0\delta+N<-n\quad \text{and}\quad -\delta n+
m_++2l(\delta-1)+ 2l_0\delta+N+n<m+(\delta-\rho)N,\] we obtain
\begin{align*}
\abs{I_3}&\lesssim A^{-\delta n+ m_++2l(\delta-1)+
2l_0\delta+N+n}\le A^{m+(\delta-\rho)N}.
\end{align*}
\end{proof}

\section{Proofs of Theorem \ref{symbolic3} and Theorem \ref{symbolic4}}\label{secc:sym34}

\begin{proof}[Proof of Theorem~\ref{symbolic3}]
The second part of the statement is immediate if we write for all
$N>0$,
$$a-b=(a-\sum_{j=0}^{N-1}a_j)-(b-\sum_{j=0}^{N-1}a_j)\in BS_{\rho, \delta}^{m_N},$$
and recall that $m_N\searrow -\infty$ as $N\rightarrow\infty$.

For the proof of the first part of Theorem \ref{symbolic3} we
proceed by explicitly constructing $a$. Let $\psi\in C_c^\infty
(\brn\times\brn)$ such that $0\leq \psi\leq 1$, $\psi (\xi, \eta)=0$
on $\{(\xi, \eta): |\xi|+|\eta|\leq 1\}$ and $\psi (\xi, \eta)=1$ on
$\{(\xi, \eta): |\xi|+|\eta|\geq 2\}$. Define
$$a(x, \xi, \eta)=\sum_{j=0}^\infty \psi (\epsilon_j\xi, \epsilon_j\eta)a_j (x, \xi, \eta),$$
where $\epsilon_j\searrow 0$ as $j\rightarrow\infty$ is an
appropriately chosen sequence of numbers in $(0, 1)$ so that $a\in
BS_{\rho, \delta}^{m_0}$. The choice of this sequence will be made
explicit below.

For each fixed $\epsilon\in (0,1)$ we have
\begin{enumerate}[(a)]

\item\label{(a)} $\psi (\epsilon\xi, \epsilon\eta)=0$ for $|\xi|+|\eta|\leq
1/\epsilon$;

\item\label{(b)} $\psi (\epsilon\xi, \epsilon\eta)=1$ for $|\xi|+|\eta|\geq
2/\epsilon$;

\item\label{(c)} For all $\be, \ga$ such that $|\be|+|\ga|\geq 1$,
$\partial_\xi^\be\partial_\eta^\ga \psi (\epsilon\xi,
\epsilon\eta)=0$ for $|\xi|+|\eta|\leq 1/\epsilon$ or
$|\xi|+|\eta|\geq 2/\epsilon$;

\item\label{(d)} $|\partial_\xi^\be\partial_\eta^\ga \psi (\epsilon\xi,
\epsilon\eta)|\leq c_{\be\ga}\epsilon^{|\be|+|\ga|}.$
\end{enumerate}

In particular, because of \eqref{(a)} and \eqref{(b)} (that is, we
only care about pairs $(\xi, \eta)$ such that
$1/\epsilon<|\xi|+|\eta|<2/\epsilon$), \eqref{(d)} is equivalent to

\begin{enumerate}[(e)]
\item\label{(e)} $|\partial_\xi^\be\partial_\eta^\ga \psi (\epsilon\xi, \epsilon\eta)|\leq c_{\be\ga} (1+|\xi|+|\eta|)^{-|\be|-|\ga|}.$
\end{enumerate}
This in turn is equivalent to saying that the family $\{\psi
(\epsilon\xi, \epsilon\eta)\}_{0<\epsilon<1}$ represents a bounded
set in $BS_{1, 0}^0$ (endowed with the topology induced by
appropriate semi-norms; see \cite{bt1}).

Based on the estimate (e) and $a_j\in BS_{\rho, \delta}^{m_j}$, we
can control each of the terms in the sum that defines $a$. By using
Leibniz' rule, we immediately obtain \bq\label{piece1}
|\partial_\xi^\be\partial_\eta^\ga \psi (\epsilon_j\xi,
\epsilon_j\eta)a_j (x, \xi, \eta)|\leq C_{j, \al\be\ga}
(1+|\xi|+|\eta|)^{m_j+\delta |\alpha|-\rho (|\be|+|\ga|)}. \eq Let
us now select $\epsilon_j$ such that $C_{j, \al\be\ga}\epsilon_j\leq
2^{-j}$ for all $|\al+\be+\ga|\leq j$. Due to (b) and
\eqref{piece1}, we can therefore write, for all $|\al+\be+\ga|\leq
j$, \bq\label{piece2} |\partial_\xi^\be\partial_\eta^\ga \psi
(\epsilon_j\xi, \epsilon_j\eta)a_j (x, \xi, \eta)|\leq 2^{-j}
(1+|\xi|+|\eta|)^{m_j+1+\delta |\alpha|-\rho (|\be|+|\ga|)}. \eq
Now, for a fixed $(x, \xi, \eta)$, the sum defining $a(x, \xi,
\eta)$ is finite. Indeed, by (a), if infinitely many terms
corresponding to a subsequence $(j_k)$ are non-zero, we necessarily
have $|\xi|+|\eta|>1/\epsilon_{j_k}\rightarrow \infty$ as
$k\rightarrow\infty$, a contradiction. In particular, we also have $a\in
C^\infty (\brn\times\brn\times\brn)$.

Fix then a triple of multi-indices $(\al, \be, \ga)$ and let $J>0$
be such that $|\al+\be+\ga|\leq J$. We split
$$a=S_1 (a)+S_2(a),$$
where
$$S_1 (a)=\sum_{j=0}^{J-1} \psi (\epsilon_j\xi, \epsilon_j\eta)a_j (x, \xi, \eta)$$
and
$$S_2 (a)=\sum_{j=J}^{\infty} \psi (\epsilon_j\xi, \epsilon_j\eta)a_j (x, \xi, \eta).$$

Since $S_1 (a)$ is a finite sum and each of its terms belongs to
$BS_{\rho, \delta}^{m_j}\subset BS_{\rho, \delta}^{m_0}$, we infer that
$S_1 (a)\in BS_{\rho, \delta}^{m_0}$.

To estimate $S_2 (a)$, recall first that for all $j\geq J$,
$m_j+1\leq m_J+1\leq m_0$, thus, by using \eqref{piece2}, we get
that
$$|\partial_x^\al\partial_\xi^\be\partial_\eta^\ga S_2 (a)|\leq \left(\sum_{j=J}^\infty 2^{-j}\right)
(1+|\xi|+|\eta|)^{m_0+\delta |\al|-\rho (|\be|+|\ga|)},$$ which
implies that $S_2 (a)\in BS_{\rho, \delta}^{m_0}$.

Thus, we conclude that $a\in BS_{\rho, \delta}^{m_0}$. We finally
arrive to the asymptotic expansion of $a$. We have
$$a-\sum_{j=0}^{N-1} a_j=\sum_{j=0}^{N-1}(\psi (\epsilon_j \xi, \epsilon_j\eta)-1)a_j + \sum_{j=N}^\infty \psi(\epsilon_j \xi, \epsilon_j\eta)
a_j.$$ In the first sum, because of (b), we only care about
$|\xi|+|\eta|<2/\epsilon_{N-1}$, and therefore we can achieve
whatever decay we wish. For the second sum, we proceed exactly as
above to show that it belongs to $BS_{\rho, \delta}^{m_N}$. The
proof is complete.
\end{proof}

\begin{proof}[Proof of Theorem~\ref{symbolic4}]
Note that, by Theorem \ref{symbolic3}, we know that there exists
some $b\in BS_{\rho, \delta}^{m_0}$ such that
$$b\sim \sum_{j=0}^\infty a_j.$$
Therefore, it will be sufficient to show that $a-b\in BS^{-\infty}$.
We start by noticing that
\begin{align}\begin{split}\nonumber
|a (x, \xi, \eta)&-b(x, \xi, \eta)|\\
&\leq |a(x,\xi,\eta)-\sum_{j=0}^{N-1}a_j(x, \xi, \eta)|+|b(x, \xi,
\eta)-\sum_{j=0}^{N-1}
a_j (x, \xi, \eta)|\\
&\leq C_N (1+|\xi|+|\eta|)^{-\mu_{N-1}}+\tilde C_N (1+|\xi|+|\eta|)^{m_N}\\
&\leq c_N (1+|\xi|+|\eta|)^{-N},
\end{split}
\end{align}
because both $-\mu_{N-1}$ and $m_N$ converge to $-\infty$ as
$N\rightarrow\infty$.

To estimate the derivatives of the difference $a-b$ we will employ
the following useful result; see the book by Taylor~\cite{tay}, p.41:

\medskip

\noindent  {\it If $K_1, K_2$ are two compact sets such that
$K_1\subset K_2^{\circ}\subset K_2$ and $u\in C_c^2 (\brn)$, then
$$\sum_{|\al|=1}\sup_{z\in K_1}|D^\al u(z)|\lesssim \sup_{z\in
K_2}|u(z)|\sum_{|\al|\leq 2}\sup_{z\in K_2}|D^\al u(z)|.$$} This is
an immediate consequence of the estimate of a first order partial
differential operator in terms of its second order partial
differential operator:
$$\|\partial u/\partial x_j\|_{L^\infty}^2\lesssim
\|u\|_{L^\infty}\|\partial^2 u/\partial x_j^2\|_{L^\infty},\qquad
u\in C_c^2 (\brn).$$

Let then $K$ be a compact set such that $x\in K$. Set $K_1=K\times
\{0\}\times \{0\}$, and let $K_2$ be a compact neighborhood of $K_1$. For
fixed $\xi, \eta$, define
$$F_{\xi, \eta}(x, \zeta, \zeta ')=a(x, \xi+\zeta, \eta+\zeta
')-b(x, \xi+\zeta, \eta+\zeta ').$$ We can write
\begin{align}\begin{split}\nonumber
&\sup_{x\in K} |\nabla_{x, \xi, \eta} (a-b)(x, \xi,
\eta)|^2=\sup_{(x, \zeta, \zeta ')\in K_1}|\nabla_{(x, \zeta, \zeta
')}
F_{\xi, \eta}(x, \zeta, \zeta ')|^2\\
&\lesssim \sup_{(x, \zeta, \zeta ')\in K_2}|F_{\xi, \eta}(x, \zeta,
\zeta ')|\sum_{|\al|\leq 2}
\sup_{(x, \zeta, \zeta ')\in K_2}|D^\al_{x,\zeta, \zeta '}(a-b)(x, \xi+\zeta, \eta+\zeta ')|\\
&\leq C_N\sup_{(x, \zeta, \zeta ')}(1+|\xi+\zeta|+|\eta+\zeta
'|)^{-N}(1+|\xi+\zeta|+|\eta+\zeta '|)^{\max (\mu, m_0+2(\delta-\rho
))}.
\end{split}
\end{align}
Since we have the freedom of choosing the compact neighborhood
$K_2$, we can assume that on it $|\zeta|\leq 1/3, |\zeta '|\leq
1/3.$ Then, by the triangle inequality, we have
$$1+|\xi+\zeta|+|\eta+\zeta '|\geq \frac{1}{3}(1+|\xi|+|\eta|),$$
and therefore we get, for all $N>0$, the estimate
$$|\partial_x\partial_\xi\partial_\eta (a-b)(x, \xi, \eta)|\leq C_N3^N2^{\max (\mu, m_0+2(\delta-\rho ))}(1+|\xi|+|\eta|)^{-N}.$$
Analogously, we will be able to control all the derivatives of $a-b$
by
$$|\partial_x^\al\partial_\xi^\be\partial_\eta^\ga (a-b)(x, \xi, \eta)|\leq C_N(\al, \be,\ga) (1+|\xi|+|\eta|)^{-N}.$$
This proves that $a-b\in BS^{-\infty}$ and the proof is complete.
\end{proof}

\section{Pointwise kernel estimates.}\label{secc:kernel}

In this section we will describe decay/blow-up estimates for
bilinear kernels associated to pseudodifferntial operators. These estimates can be summarized as
follows.

\begin{theorem}  Let $p \in BS^0_{\rho,\delta}, $ $0 < \rho \leq 1$, $0 \leq \delta < 1$, $m \in \re$, and let $k(x,y,z)$ denote the distributional kernel of associated bilinear pseudodifferential operator $T_p$. Let $\zp$ denote the set of non-negative integers and for $x, y, z \in \brn$, set
$$
S(x,y,z) = |x-y|+|x-z|+|y-z|.
$$
\begin{enumerate}[(i)]

\item Given $\alpha, \beta, \gamma \in \zpn$, there exists $N_0 \in \zp$ such that for each $N \geq N_0$,
$$
\sup_{(x,y,z): S(x,y,z) > 0} S(x,y,z)^N |D^\alpha_x D^\beta_y
D^\gamma_z k(x,y,z)| < \infty
$$
\item Suppose that $p$ has compact support in $(\xi, \eta)$ uniformly in $x$. Then $k$ is smooth, and given $\alpha, \beta, \gamma \in \zpn$ and
$N_0 \in \zp$, there exists $C > 0$ such that for all $x, y, z \in \brn$ with $S(x,y,z)>0$
    $$
    |D^\alpha_x D^\beta_y D^\gamma_z k(x,y,z)| \leq C (1 + S(x,y,z))^{-N}.
    $$
\item Suppose that $m + M + 2n < 0$ for some $M \in \zp$. Then $k$ is a bounded continuous function with bounded continuous derivatives of order $\leq M$.

\item Suppose that $m + M + 2n = 0$ for some $M \in \zp$. Then there exists a constant $C > 0$ such that for all $x, y, z \in \brn$ with $S(x,y,z)>0$,
$$
\sup_{|\alpha + \beta + \gamma|=M}|D^\alpha_x D^\beta_y D^\gamma_z
k(x,y,z)| \leq C |\log |S(x,y,z)||.
$$

\item Suppose that $m + M + 2n > 0$ for some $M \in \zp$. Then, given $\alpha, \beta, \gamma \in \zpn$, there exists a positive constant $C$ such that
for all $x, y, z \in \brn$ with $S(x,y,z) > 0$,
\begin{align*}
\sup\limits_{|\alpha + \beta + \gamma|=M} |\partial_x^\alpha
\partial_y^\beta \partial_z^\gamma k(x,y,z)| \leq C S(x,y,z)^{-(m +
M + 2n)/\rho}.
\end{align*}
\end{enumerate}
\end{theorem}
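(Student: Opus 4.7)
The strategy is the classical doubling-of-dimensions trick explicitly alluded to in the introduction: we recast $k$ as (the restriction to a diagonal of) the Schwartz kernel of a linear pseudodifferential operator on $\rtn$, and then import the linear pointwise kernel estimates of \'Alvarez-Hounie \cite{AH90}. Concretely, define the lifted symbol $P\colon \rtn\times\rtn \to \mathbb{C}$ by
\[
P\bigl((x_1,x_2),(\xi,\eta)\bigr) := p(x_1,\xi,\eta),
\]
which is independent of the phantom variable $x_2$. Since $1+|\xi|+|\eta|$ is equivalent to $1+|(\xi,\eta)|$, the bilinear H\"ormander estimate \eqref{hormander} for $p$ immediately gives $P \in S^m_{\rho,\delta}(\rtn)$ (with the correct $\delta$-loss in $X = (x_1,x_2)$ and no loss in $x_2$). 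Let $K(X,Y)$ denote the Schwartz kernel on $\rtn$ of the linear pseudodifferential operator with symbol $P$.

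A one-line Fourier calculation shows
\[
k(x,y,z) = K\bigl((x,x),(y,z)\bigr),
\]
so that, by the chain rule, any mixed derivative $\partial_x^\alpha\partial_y^\beta\partial_z^\gamma k(x,y,z)$ is a finite linear combination of derivatives $\partial_X^A\partial_Y^B K(X,Y)$ with $|A|=|\alpha|$ and $|B|=|\beta|+|\gamma|$, evaluated at $X=(x,x),\ Y=(y,z)$; in particular the total order is preserved. Using $|y-z|\le|x-y|+|x-z|$ one checks
\[
|X-Y| = \bigl(|x-y|^2+|x-z|^2\bigr)^{1/2} \sim |x-y|+|x-z| \sim S(x,y,z),
\]
so decay or blow-up rates in $|X-Y|$ transfer verbatim to decay or blow-up rates in $S(x,y,z)$.

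With these two correspondences in place, statements (i)--(v) follow by applying the main result of \cite{AH90} to $K$ on $\rtn$ (i.e.\ with dimension parameter $d=2n$): (i) from the off-diagonal rapid decay of linear Schwartz kernels produced from symbols in $S^m_{\rho,\delta}$ via repeated integration by parts in $\Xi$; (ii) from the standard fact that a symbol with compact frequency support has a kernel that is Schwartz in $X-Y$; and (iii)--(v) from the trichotomy in \cite{AH90}, which in dimension $2n$ reads
\[
|\partial_X^A\partial_Y^B K(X,Y)| \lesssim \begin{cases} 1 & \text{if } m+M+2n<0,\\ \bigl|\log|X-Y|\bigr| & \text{if } m+M+2n=0,\\ |X-Y|^{-(m+M+2n)/\rho} & \text{if } m+M+2n>0,\end{cases}
\]
for $|A|+|B|=M$. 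These are exactly the bounds required by (iii), (iv), (v) once rewritten in terms of $S(x,y,z)$.

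The main obstacle is mostly bookkeeping rather than analysis: one must verify cleanly that $P$ inherits the $S^m_{\rho,\delta}$ estimates on $\rtn$ (the asymmetry in the $X$-variable is harmless because $\partial_{x_2} P=0$), that the chain rule does \emph{not} double the order of derivatives when an $x$-derivative of $k$ is expressed through both $X_1$- and $X_2$-derivatives of $K$, and that $|X-Y|\sim S(x,y,z)$ is uniform. Once these reductions are in place, no further analytic work is needed beyond quoting \cite{AH90}.
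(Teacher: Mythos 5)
Your proposal is correct and follows essentially the same route as the paper: double the dimension, lift $p$ to a linear symbol $P\in S^m_{\rho,\delta}(\rtn)$, observe that $k(x,y,z)=K((x,x),(y,z))$ for the Schwartz kernel $K$ of $T_P$, and invoke the Alvarez-Hounie estimates. The only minor deviation is your choice of lift $P(X,\zeta)=p(x_1,\xi,\eta)$ versus the paper's $P(X,\zeta)=p\bigl(\tfrac{x_1+x_2}{2},\xi,\eta\bigr)$, which is inconsequential; you also spell out the equivalence $|X-Y|\sim S(x,y,z)$ and the chain-rule bookkeeping that the paper leaves implicit.
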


\begin{proof} Given a bilinear symbol $p(x,\xi, \eta)$ with $x, \xi, \eta \in \brn$, set $X=(x_1, x_2) \in \rtn$, $\zeta = (\xi, \eta) \in \rtn$ and define the linear symbol $P$ in $\rtn$ as
$$
P(X,\zeta) = p\left( \frac{x_1+x_2}{2}, \xi, \eta\right).
$$
It follows easily that if $p \in BS^{m}_{\rho, \delta}(\brn)$, for
some $m \in \re$ and $\rho, \delta \in [0,1]$, then $P \in
S^m_{\rho,\delta}(\rtn)$. Indeed, given $\alpha = (\alpha_1,
\alpha_2), \beta = (\beta_1, \beta_2) \in \zptn$
\begin{align*}
\left|\partial_X^\alpha \partial_\zeta^\beta P(X,\zeta) \right|& = \left|  \left(\frac{1}{2}\right)^{|\alpha_1 + \alpha_2|} (\partial_x^{\alpha_1+\alpha_2} \partial_\xi^{\beta_1} \partial_\eta^{\beta_2}p)\left(\frac{x_1+x_2}{2}, \xi, \eta\right)\right|\\
& \leq C_{\alpha, \beta} \left(\frac{1}{2}\right)^{|\alpha_1 + \alpha_2|}   (1+|\xi|+|\eta|)^{m - \rho |\beta_1 + \beta_2| + \delta |\alpha_1 + \alpha_2|}\\
& = C_{\alpha, \beta} \left(\frac{1}{2}\right)^{|\alpha_1 +
\alpha_2|}   (1+|\zeta|)^{m - \rho |\beta| + \delta |\alpha|}.
\end{align*}
In $\rtn$, for the associated linear operator $T_P$ we now have
\begin{align*}
T_P(F)(X) & = \int_{\rtn} P(X,\zeta) e^{i X \cdot \zeta} \widehat{F}(\zeta) d\zeta\\
 & = \int_{\brn} \int_{\brn} p\left( \frac{x_1+x_2}{2}, \xi, \eta\right) e^{i x_1 \cdot \xi} e^{i x_2 \cdot \eta} \widehat{F}(\xi, \eta) \, d\xi d\eta,
\end{align*}
and also
$$
T_P(F)(X) = \int_{\rtn} K(X,Y) F(Y) dY,
$$
where
$$
K(X,Y) = \mathcal{F}_{2n}(P(X, \cdot))(Y-X), \quad X, Y \in \rtn,
$$
and $\mathcal{F}_{2n}$ denotes the Fourier transform in $\rtn$.
Next, for the bilinear symbol $p$ we write
\begin{align*}
T_p(f,g)(x) & =\int_{\brn} \int_{\brn} p(x,\xi, \eta) e^{i x \cdot (\xi + \eta)} \hat{f}(\xi) \hat{g}(\eta) d\xi d\eta\\
&  = \int_{\brn} \int_{\brn} p(x,\xi, \eta) e^{i x \cdot \xi} e^{i x \cdot \eta} \widehat{f \otimes g}(\xi, \eta) d\xi d\eta\\
& = T_P(f \otimes g)(x,x)\\
& = \int_{\brn} \int_{\brn} K( (x,x), (y,z)) (f \otimes g) (y,z) dy dz\\
& = \int_{\brn} \int_{\brn} K( (x,x), (y,z)) f(y) g(z) dy dz.
\end{align*}
Therefore, the distributional bilinear kernel $k(x,y,z)$ of the
bilinear operator $T_p$ is given by
$$
k(x,y,z) = K( (x,x), (y,z)), \quad x, y, z \in \brn,
$$
where $K(X,Y)$ is the distributional linear kernel associated to the
linear operator $T_P$ in $\rtn$. Finally, the pointwise estimates
for linear kernels associated to symbols in $S^m_{\rho, \delta}(
\rtn)$ in \cite[Theorem 1.1]{AH90} imply the desired pointwise
estimates for the bilinear kernel $k(x,y,z)$.
\end{proof}

\section{An $L^2 \times W_0^{s,\infty} \rightarrow L^2$ boundedness property.}

If  $\sigma\in BS^0_{\rho,\delta},$  by freezing $g,$ $T_\sigma(\cdot,g)$ can be regarded as a
linear pseudodifferential  operator (with symbol depending on $g$),
that is,
\[T_\sigma(f,g)(x)=\int_\xi \sigma_g(x,\xi)\hat{f}(\xi)e^{i\xi x}\,d\xi,\]
where
\[\sigma_g(x,\xi)=\int_{\eta}\sigma(x,\xi,\eta)\hat{g}(\eta)e^{i\eta x}\,d\eta.\]
Moreover, the well-known $L^2$ boundedness of a linear
pseudodifferential operator asserts that if $\tau\in S^{0}_{\rho,
\delta},$ $0\le \delta\le\rho\le 1,$ $\delta<1,$ there exist
constants $C_0$ and $k\in\bn$ (independent of $\tau$) such that
\[\|T_\tau(u)\|_{L^2}\le C_0 \abs{\tau}_{k}\|u\|_{L^2},\quad u\in\mathcal{S}(\brn),\]
where
\[\abs{\tau}_{k}=\max_{\abs{\alpha}, \abs{\beta}\le k}\sup_{x,\,\xi}\abs{\partial_x^\alpha\partial_\xi^\beta \tau(x,\xi)}
(1+\abs{\xi})^{-\delta\abs{\alpha}+\rho\abs{\beta}}.\] In fact, $k$
can be taken equal to $[n/2]+1$, see \cite[p. 30]{cm1}.

\begin{theorem}\label{LWL}

Let $\sigma\in BS^0_{\rho,\delta},$ $0\le\delta\le \rho\le 1,$ $\delta<1.$ Then
$$
T_\sigma : L^2 \times W_0^{s, \infty} \rightarrow L^2,
$$
where $s$ is any integer satisfying
\begin{equation}\label{choices}
s > \frac{[n/2]+1}{1-\delta}+n.
\end{equation}
Moreover, if $g\in C^s_c(\brn)$ then $\sigma_g\in S^0_{\rho,\delta}$, and
\[
\abs{\sigma_g}_{[n/2]+1}\lesssim
\|g\|_{W_0^{s,\infty}}:=\sup_{\abs{\gamma}\le
s}\|D^{\gamma}g\|_{L^\infty}
\]
with a constant depending only on the  $BS^0_{\rho,\delta}$-norm of
$\sigma$ up to order $n+2$.
\end{theorem}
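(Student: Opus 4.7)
The plan is to deduce the theorem from its ``Moreover'' part, which reduces the bilinear boundedness to a linear one via the freezing identity $T_\sigma(f,g)=T_{\sigma_g}f$. If I can show that for each $g\in C_c^s(\brn)$ the frozen symbol $\sigma_g$ lies in $S^0_{\rho,\delta}$ with $|\sigma_g|_{[n/2]+1}\lesssim \|g\|_{W_0^{s,\infty}}$ and with a constant depending only on finitely many $BS^0_{\rho,\delta}$-seminorms of $\sigma$, then the Calder\'on--Vaillancourt $L^2$-bound for $S^0_{\rho,\delta}$ recalled immediately before the statement gives
\[
\|T_\sigma(f,g)\|_{L^2}=\|T_{\sigma_g}f\|_{L^2}\le C_0\,|\sigma_g|_{[n/2]+1}\|f\|_{L^2}\lesssim \|g\|_{W_0^{s,\infty}}\|f\|_{L^2}
\]
for all $f\in L^2$ and $g\in C_c^s$. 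Since by definition $W_0^{s,\infty}$ is the completion of $C_c^s$ under $\|\cdot\|_{W_0^{s,\infty}}$, a standard density argument extends the bound to all $g\in W_0^{s,\infty}$.

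For the symbol estimate, which is the core of the proof, I would start from the oscillatory integral representation
\[
\sigma_g(x,\xi)=\iint \sigma(x,\xi,\eta)\,g(y)\,e^{i(x-y)\cdot\eta}\,dy\,d\eta
\]
and differentiate in $x$ and $\xi$ under the integral sign. By Leibniz, each $x$-derivative either falls on $\sigma$ (costing at worst a factor $(1+|\xi|+|\eta|)^{\delta}$) or on the phase (producing a factor $i\eta$), so a typical piece of $\partial_x^\alpha\partial_\xi^\beta \sigma_g$ is
\[
J_{\alpha_1,\alpha_2}(x,\xi)=\iint \partial_x^{\alpha_1}\partial_\xi^\beta\sigma(x,\xi,\eta)\,(i\eta)^{\alpha_2}\,g(y)\,e^{i(x-y)\cdot\eta}\,dy\,d\eta
\]
with $\alpha_1+\alpha_2=\alpha$. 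Integration by parts in $y$, via $(i\eta)^{\alpha_2}e^{i(x-y)\cdot\eta}=(-1)^{|\alpha_2|}\partial_y^{\alpha_2}e^{i(x-y)\cdot\eta}$, shifts the polynomial factor onto $g$ and replaces $g$ by $\partial_y^{\alpha_2}g$. To extract decay in $|\xi|$ I would next integrate by parts $L$ times in $\eta$ using $(1-\Delta_\eta)^L e^{i(x-y)\cdot\eta}=(1+|x-y|^2)^L e^{i(x-y)\cdot\eta}$, which produces the weight $(1+|x-y|^2)^{-L}$ that makes the $y$-integration absolutely convergent as soon as $2L>n$ and places $2L$ additional $\eta$-derivatives on $\sigma$; the cumulative order of $\sigma$-derivatives employed explains why the constant in the final bound depends on the $BS^0_{\rho,\delta}$-seminorms ``up to order $n+2$''.

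The heart of the argument is then a frequency decomposition of the $\eta$-integration into the low-frequency region $|\eta|\le 1+|\xi|$, where $(1+|\xi|+|\eta|)\sim 1+|\xi|$ and the symbol bounds readily yield the factor $(1+|\xi|)^{\delta|\alpha|-\rho|\beta|}$ balanced against the volume $\sim(1+|\xi|)^n$ of the ball, and the high-frequency region $|\eta|>1+|\xi|$, where $(1+|\xi|+|\eta|)\sim|\eta|$ and the extra $\eta$-derivatives, combined with the pointwise bound $\|\partial_y^{\alpha_2}g\|_{L^\infty}\le \|g\|_{W_0^{s,\infty}}$, provide the decay needed for the integral to converge with the correct $(1+|\xi|)$-exponent. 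The main obstacle is the quantitative matching of these two contributions: requiring that the total bound be dominated by $(1+|\xi|)^{\delta|\alpha|-\rho|\beta|}\|g\|_{W_0^{s,\infty}}$ with a constant independent of the support of $g$, uniformly for $|\alpha|,|\beta|\le [n/2]+1$, is what forces the smoothness index \eqref{choices}, $s>([n/2]+1)/(1-\delta)+n$, with the factor $1/(1-\delta)$ arising from the trade-off between $\delta$-loss on $x$-derivatives of $\sigma$ and the polynomial decay produced by the $\eta$-integration by parts. Once this symbol estimate is in hand with support-independent constants, the density and approximation step of the first paragraph completes the proof.
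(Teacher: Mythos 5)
Your high-level reduction is right and matches the paper's: freeze $g$ to get $T_\sigma(f,g)=T_{\sigma_g}f$, establish $\sigma_g\in S^0_{\rho,\delta}$ with $\abs{\sigma_g}_{[n/2]+1}\lesssim \norm{g}{W_0^{s,\infty}}$ uniformly in the support of $\sigma$, invoke the Calder\'on--Vaillancourt bound, and extend by density. The gap is in the symbol estimate itself, and it is a real one, not a matter of details left unwritten.

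Your only mechanism for making the $\eta$-integral converge in the high-frequency region $\abs{\eta}>1+\abs{\xi}$ is to transfer $\eta$-derivatives onto $\sigma$ through $(1-\Delta_\eta)^L$, which gains $(1+\abs{\xi}+\abs{\eta})^{-2L\rho}$. When $\rho=0$ this gains nothing: the integrand in $\eta$ is of size $(1+\abs{\xi}+\abs{\eta})^{\delta\abs{\alpha_1}}$ and $\int d\eta$ diverges with a constant depending on the support of $\sigma$. Since the theorem is explicitly stated for $0\le\delta\le\rho\le 1$, $\delta<1$, and the remark right after it stresses that the case $\rho=\delta=0$ is included, the argument does not close in precisely the range the result is designed to cover. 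A telltale sign that something is off is that your scheme only ever places at most $\abs{\alpha_2}\le[n/2]+1$ derivatives on $g$; if it worked it would prove the stronger statement $s=[n/2]+1$, and the claimed trade-off ``explaining'' the factor $1/(1-\delta)$ in \eqref{choices} is not actually generated by any step you write down.

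What the paper does differently, and what you are missing, is a second integration by parts in $y$ of the form
\[
\int_y q\, e^{iz\cdot y}\,dy = \frac{1}{\abs{z}^{2l}}\int_y (-\Delta_y)^{l}q\, e^{iz\cdot y}\,dy,
\]
applied on the regions where $\abs{z}$ (the variable playing the role of $\eta$) is large. Here $(-\Delta_y)^l$ falls on $g(x-y)/(1+A^{2\delta}\abs{y}^2)^{l_0}$, so it converts $2l$ derivatives of $g$ (hence $\norm{g}{W_0^{2l,\infty}}$) into the absolute decay $\abs{z}^{-2l}$, which works for all $\rho\ge 0$. It is this step, together with the $A^{2\delta}$-weighted operator $(1+A^{2\delta}(-\Delta_z))^{l_0}$ and a three-region split at $\abs{z}\sim A^\delta$ and $\abs{z}\sim A$, that produces the constraint $(\delta-1)(2l-n)+\abs{\alpha}-\abs{\gamma}\le 0$, i.e.\ $2l>(\abs{\alpha}-\abs{\gamma})/(1-\delta)+n$, which with $\abs{\alpha}\le[n/2]+1$ is exactly \eqref{choices} upon taking $s=2l$. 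In short: you need $\eta$-decay coming from derivatives of $g$, not from derivatives of $\sigma$, and that is both what saves the $\rho=0$ case and what forces the size of $s$.
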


\begin{remark}
Note that Theorem \ref{LWL} includes the case $0 \leq \rho=\delta < 1$ and, in particular, the case $\rho = \delta = 0$, where, as pointed
out in the introduction, the mapping from $L^2(\brn)\times L^\infty(\brn)$ into $L^2(\brn)$ fails.
\end{remark}

\begin{proof} Let $g\in C^s_c(\brn)$ and $\sigma\in BS^0_{\rho,\delta},$
$0\le\delta\le\rho\le 1,$ $\delta<1.$ We assume $\sigma$ has
compact support so all calculations below can be justified. However,
all constants are independent of the support of $\sigma$ and a
limiting argument proves the result when $\sigma$ does not
have compact support (see \cite{bt1}).

Fix multiindices $\alpha$ and $\beta$ such that $\abs{\alpha},
\abs{\beta}\le [n/2]+1.$ Define $A=1+\abs{\xi}$ and let $l_0\in \bn$
(to be chosen later) and
$P_{l_0}=\{\gamma=(\gamma_1,\cdots,\gamma_n):\gamma_i \text{ is even
and }\abs{\gamma}=2j, j=0, \dots, l_0\}.$ We have,
\begin{align*}
& \partial_x^\alpha\partial_\xi^\beta
\sigma_g(x,\xi)=\sum_{\gamma\le
\alpha}c_{\gamma,\alpha}\int_z\int_y\partial_x^\gamma\partial_\xi^\beta\sigma(x,\xi,z)z^{\alpha-\gamma}e^{izy}g(x-y)\,dydz\\
&=\sum_{\gamma\le
\alpha}c_{\gamma,\alpha}\int_{z}\int_{y}(1+A^{2\delta}(-\Delta_z))^{l_0}(\partial_x^\gamma\partial_\xi^\beta\sigma(x,\xi,z)z^{\alpha-\gamma})
\frac{e^{izy}g(x-y)}{(1+A^{2\delta}\abs{y}^2)^{l_0}}\,dydz\\
&=\sum_{\gamma\le \alpha, \theta\in
P_{l_0}}c_{\gamma,\alpha,\theta}\int_{z}\int_{y}A^{\delta\abs{\theta}}\partial_z^\theta(\partial_x^\gamma\partial_\xi^\beta\sigma(x,\xi,z)z^{\alpha-\gamma})
\frac{e^{izy}g(x-y)}{(1+A^{2\delta}\abs{y}^2)^{l_0}}\,dydz\\
&=\mathop{\sum_{\gamma\le \alpha, \theta\in P_{l_0}}}_{\omega\le
\text{min}\{\theta, \alpha-\gamma\}}c_{\gamma,\alpha,\theta,
\omega}\int_{z}\int_{y}A^{\delta\abs{\theta}}\partial_x^\gamma\partial_\xi^\beta\partial_z^{\theta-\omega}\sigma(x,\xi,z)z^{\alpha-\gamma-\omega}
\frac{e^{izy}g(x-y)}{(1+A^{2\delta}\abs{y}^2)^{l_0}}\,dydz.
\end{align*}

Fix $\gamma\le \alpha,$ $\theta\in P_{l_0},$ $\omega\le
\text{min}\{\theta, \alpha-\gamma\}$ and set
\[p(x,\xi)=\int_{z}\int_{y}A^{\delta\abs{\theta}}\partial_x^\gamma\partial_\xi^\beta\partial_z^{\theta-\omega}\sigma(x,\xi,z)z^{\alpha-\gamma-\omega}
\frac{e^{izy}g(x-y)}{(1+A^{2\delta}\abs{y}^2)^{l_0}}\,dydz.\]

Define the sets
\[
\Omega_1=\{z:\abs{z}\le {\textstyle \frac{A^{\delta}}{2}}\}, \quad
\Omega_2=\{z:{\textstyle\frac{A^{\delta}}{2}}\le \abs{z}\le
{\textstyle\frac{A}{2}}\},\quad \Omega_3=\{z:\abs{z}\ge
{\textstyle\frac{A}{2}}\}.
\]
We then have
\[
p(x,\xi)=\int_{\Omega_1}\int_y\cdots+\int_{\Omega_2}\int_y\cdots+\int_{\Omega_3}\int_y\cdots:=I_1+I_2+I_3.
\]

Note that
\begin{equation*}
 A\le 1+\abs{\xi}+\abs{z}\le 2 A,\qquad z\in
\Omega_1\cup\Omega_2
\end{equation*}
and that
\begin{equation*}
 \abs{z}\le 1+\abs{\xi}+\abs{z}\le 2 \abs{z},\qquad z\in
\Omega_3
\end{equation*}

\bigskip

{\bf Estimation for $I_1.$} Choose $l_0$ such that $2l_0>n.$ Then
\begin{align*}
\abs{I_1}&\lesssim
A^{\delta\abs{\theta}}\|g\|_{L^\infty}\int_y\int_{\abs{z}\le
\frac{A^\delta}{2}}(1+\abs{\xi}+\abs{z})^{\delta\abs{\gamma}-
\rho(\abs{\beta}+\abs{\theta}-\abs{\omega})}\frac{\abs{z}^{\abs{\alpha}-\abs{\gamma}-\abs{\omega}}}{(1+A^{2\delta}\abs{y}^2)^{l_0}}\,dzdy\\
&\sim A^{\delta\abs{\theta}}\|g\|_{L^\infty} A^{\delta\abs{\gamma}-
\rho(\abs{\beta}+\abs{\theta}-\abs{\omega})} A^{-\delta n}
A^{\delta(\abs{\alpha}-\abs{\gamma}-\abs{\omega}+n)}\\
&= A^{(\delta-\rho)(\abs{\theta}-\abs{w})}
A^{\delta\abs{\alpha}-\rho\abs{\beta}}\|g\|_{L^\infty}\\
&\le A^{\delta\abs{\alpha}-\rho\abs{\beta}}\|g\|_{L^\infty}.
\end{align*}
Note that in the last inequality we have used that $\delta\le \rho$
and $\abs{\theta}-\abs{w}\ge 0.$

\bigskip

{\bf Estimation for $I_2.$} Let $l\in\bn$ to be chosen later. We have
\begin{align*}
I_2&= \int_{\Omega_2}\int_y
A^{\delta\abs{\theta}}\partial_x^\gamma\partial_\xi^\beta\partial_z^{\theta-\omega}\sigma(x,\xi,z)z^{\alpha-\gamma-\omega}
\frac{g(x-y)}{(1+A^{2\delta}\abs{y}^2)^{l_0}}
\frac{(-\Delta_y)^{l}(e^{izy})}{\abs{z}^{2l}}\,dydz\\
&= \int_{\Omega_2}A^{\delta\abs{\theta}}
\frac{z^{\alpha-\gamma-\omega}}{\abs{z}^{2l}}\partial_x^\gamma\partial_\xi^\beta\partial_z^{\theta-\omega}
\sigma(x,\xi,z)\int_y(-\Delta_y)^{l}\left(\frac{g(x-y)}{(1+A^{2\delta}\abs{y}^2)^{l_0}}\right)e^{izy}\,dy\,dz.
\end{align*}

Now,
\begin{align*}
\abs{(-\Delta_y)^{l}\left(\frac{g(x-y)}{(1+A^{2\delta}\abs{y}^2)^{l_0}}\right)}&=\abs{\mathop{\sum_{\abs{\mu}=2l}}_{\mu_i
even, \nu\le \mu} C_{\nu\mu}\partial_y^\nu((1+A^{2\delta}
\abs{y}^2)^{-l_0})\partial_y^{\mu-\nu}g(x-y)}\\
&\le \mathop{\sum_{\abs{\mu}=2l}}_{\mu_i even, \nu\le \mu} C_{\nu\mu
l_0} \|D^{\mu-\nu}g\|_{L^\infty} A^{\delta\abs{\nu}}
(1+A^{2\delta}\abs{y}^2)^{-l_0}.
\end{align*}

Therefore,
\begin{align*}
& \abs{I_2}\le \int_{\Omega_2}A^{\delta\abs{\theta}}
\frac{\abs{z^{\alpha-\gamma-\omega}}}{\abs{z}^{2l}}\abs{\partial_x^\gamma\partial_\xi^\beta\partial_z^{\theta-\omega}
\sigma(x,\xi,z)} \mathop{\sum_{\abs{\mu}=2l}}_{\mu_i even, \nu\le
\mu} C_{\nu\mu l_0} \|D^{\mu-\nu}g\|_{L^\infty}
A^{\delta\abs{\nu}}A^{-\delta
n}\,dz\\
&\le \mathop{\sum_{\abs{\mu}=2l}}_{\mu_i even, \nu\le \mu} C_{\nu\mu
l_0} \|D^{\mu-\nu}g\|_{L^\infty}
A^{\delta(\abs{\theta}+\abs{\nu}-n)} \int\limits_{\Omega_2}
\abs{z}^{\abs{\alpha}-\abs{\gamma}-\abs{\omega}-2l}
(A+\abs{z})^{\delta\abs{\gamma}-\rho(\abs{\beta}+\abs{\theta}-\abs{\omega})}\,dz\\
&\le A^{\delta(\abs{\theta}+2l-n)} \|g\|_{W_0^{2l,\infty}}
A^{\delta\abs{\gamma}-\rho(\abs{\beta}+\abs{\theta}-\abs{\omega})}
A^{\delta(\abs{\alpha}-\abs{\gamma}-\abs{\omega}-2l+n)},
\end{align*}
where we have used that $A+\abs{z}\sim A$ on $\Omega_2$
and $l$ has been chosen so that
\begin{equation}\label{l1}
\abs{\alpha}-\abs{\gamma}-\abs{\omega}-2l+n<0.
\end{equation}
Thus we obtain,
\begin{align*}
\abs{I_2}&\le C\, A^{\delta(\abs{\theta}+2l-n)} \|g\|_{W_0^{2l,\infty}}
A^{\delta\abs{\gamma}-\rho(\abs{\beta}+\abs{\theta}-\abs{\omega})}
A^{\delta(\abs{\alpha}-\abs{\gamma}-\abs{\omega}-2l+n)}\\
& = C\, A^{\delta\abs{\alpha}-\rho\abs{\beta}}
A^{(\delta-\rho)(\abs{\theta}-\abs{w})} \|g\|_{W_0^{2l,\infty}}\\
&\le C\, A^{\delta\abs{\alpha}-\rho\abs{\beta}}  \|g\|_{W_0^{2l,\infty}},
\end{align*}
since $(\delta-\rho)(\abs{\theta}-\abs{w})\le 0.$

\bigskip

{\bf Estimation for $I_3.$} Here we impose some extra conditions to
$l$ above. As in the estimation for $B_2$ we have
\begin{align*}
\abs{I_3} &\le \mathop{\sum_{\abs{\mu}=2l}}_{\mu_i even, \nu\le \mu}
C_{\nu\mu l_0} \|D^{\mu-\nu}g\|_{L^\infty}
A^{\delta(\abs{\theta}+\abs{\nu}-n)} \int_{\Omega_3}
\abs{z}^{\abs{\alpha}-\abs{\gamma}-\abs{\omega}-2l}
(A+\abs{z})^{\delta\abs{\gamma}-\rho(\abs{\beta}+\abs{\theta}-\abs{\omega})}\,dz.
\end{align*}

Using that $A+\abs{z}\sim \abs{z}$ in $\Omega_3$ we have
\begin{align*}
\abs{I_3} &\le C\, \|g\|_{W_0^{2l,\infty}}
A^{\delta(\abs{\theta}+2l-n)} \int_{\Omega_3}
\abs{z}^{\abs{\alpha}-\abs{\gamma}-\abs{\omega}-2l+\delta\abs{\gamma}-\rho(\abs{\beta}+\abs{\theta}-\abs{\omega})}\,dz.\end{align*}

Now, let us choose $l$ as in the estimation of $I_2$ and satisfying
\[
\abs{\alpha}-\abs{\gamma}-\abs{\omega}-2l+\delta\abs{\gamma}-\rho(\abs{\beta}+\abs{\theta}-\abs{\omega})+n<0.
\]
For example, any choice of $l$ such that
$2l>\abs{\alpha}+n$ satisfies the inequality above. Then,
\begin{align*}
\abs{I_3} &\le C\, A^{\delta(\abs{\theta}+2l-n)}\, A^{\abs{\alpha}-\abs{\gamma}-\abs{\omega}-2l+\delta\abs{\gamma}-\rho(\abs{\beta}+\abs{\theta}-\abs{\omega})+n}\, \|g\|_{W_0^{2l,\infty}}\\
&= C\,
A^{\delta\abs{\gamma}-\rho\abs{\beta}}\,A^{(\delta-1)(2l-n)+\abs{\alpha}-\abs{\gamma}}\,
A^{(\delta-\rho)\abs{\theta}}\,
A^{(\rho-1)\abs{\omega}}\,\|g\|_{W_0^{2l,\infty}}.
\end{align*}
Since $\delta<1$, we can choose $l$ sufficiently large so that
\begin{equation}\label{l2}
(\delta-1)(2l-n)+\abs{\alpha}-\abs{\gamma}\le 0
\end{equation}
and since $\delta-\rho\le 0,$  $\rho-1\le 0$ and $\abs{\gamma}\le
\abs{\alpha}$ we obtain
\begin{align*}
\abs{I_3} &\le  C\,
A^{\delta\abs{\alpha}-\rho\abs{\beta}}\,\|g\|_{W_0^{2l,\infty}}.
\end{align*}
Finally, by choosing $l=s/2$, with $s$ as in \eqref{choices}, we guarantee that both conditions
\eqref{l1} and \eqref{l2} are satisfied, and the proof is complete.
\end{proof}

\begin{corollary}\label{leibm} Let $m \geq 0$ and $\sigma\in BS^m_{\rho,\delta},$ $0\le\delta\le \rho\le 1,$ $\delta<1.$ Then, if $s$ is any integer satisfying \eqref{choices}, the following fractional Leibniz rule-type inequality holds true
\begin{equation}
\norm{T_\sigma(f,g)}{L^2} \leq C \left(\norm{f}{W^{m,2}} \norm{g}{W_0^{s,\infty}} + \norm{f}{W_0^{s,\infty}} \norm{g}{W^{m,2}}  \right), \quad f, g \in C_c^\infty(\rn).
\end{equation}

\end{corollary}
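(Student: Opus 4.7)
My plan is to reduce the inequality to the $m=0$ case of Theorem~\ref{LWL} (together with its $f\leftrightarrow g$ symmetric counterpart) by factoring the order-$m$ growth in the frequency variables through a Bessel potential acting on one of the two inputs. To this end I would fix a smooth partition of unity $\phi+\psi\equiv 1$ on $\brn\times\brn$ with $\phi$ and $\psi$ homogeneous of degree zero for $|\xi|+|\eta|\geq 1$, $\supp\phi\subset\{|\xi|\geq c|\eta|\}$, and $\supp\psi\subset\{|\eta|\geq c|\xi|\}$ for some small $c>0$, built from a homogeneous partition of unity on $S^{2n-1}$ smoothed near the origin. Setting
\[
\tau_1(x,\xi,\eta):=\sigma(x,\xi,\eta)\,\phi(\xi,\eta)\,(1+|\xi|^2)^{-m/2},\qquad \tau_2(x,\xi,\eta):=\sigma(x,\xi,\eta)\,\psi(\xi,\eta)\,(1+|\eta|^2)^{-m/2},
\]
and writing $J^m:=(I-\Delta)^{m/2}$ for the Bessel potential (with Fourier multiplier $(1+|\xi|^2)^{m/2}$), one reads off the identity
\[
T_\sigma(f,g)=T_{\tau_1}(J^m f,g)+T_{\tau_2}(f,J^m g)
\]
directly from the Fourier representations.

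The main analytic step is to verify that $\tau_1,\tau_2\in BS^0_{\rho,\delta}$. For $\tau_1$ I would apply Leibniz' rule distributing derivatives among the three factors, using the bounds $|\partial_\xi^{\beta}\partial_\eta^{\gamma}\phi(\xi,\eta)|\lesssim(1+|\xi|+|\eta|)^{-|\beta|-|\gamma|}$, $|\partial_\xi^{\beta}(1+|\xi|^2)^{-m/2}|\lesssim(1+|\xi|)^{-m-|\beta|}$, together with the $BS^m_{\rho,\delta}$ estimates for $\sigma$. The key geometric input is that on $\supp\phi\cap\{|\xi|+|\eta|\geq 1\}$ one has $1+|\xi|\sim 1+|\xi|+|\eta|$, so each factor $(1+|\xi|)^{-m-|\beta_3|}$ arising from $|\beta_3|$ $\xi$-derivatives of the Bessel weight is controlled by $(1+|\xi|+|\eta|)^{-m-|\beta_3|}$. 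Since $\rho\leq 1$, every decay factor $(1+|\xi|+|\eta|)^{-1}$ coming from derivatives of $\phi$ or of the Bessel weight can be relaxed to $(1+|\xi|+|\eta|)^{-\rho}$; after the Bessel weight cancels the order-$m$ growth of $\sigma$, the exponents collapse exactly to $\delta|\alpha|-\rho(|\beta|+|\gamma|)$, as required. The estimate for $\tau_2$ is entirely symmetric with the roles of $\xi$ and $\eta$ exchanged, and the compact region $|\xi|+|\eta|\leq 1$ poses no difficulty since $\sigma$ is bounded there.

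Once $\tau_1,\tau_2\in BS^0_{\rho,\delta}$ is established, Theorem~\ref{LWL} applied to $\tau_1$ gives
\[
\|T_{\tau_1}(J^m f,g)\|_{L^2}\lesssim \|J^m f\|_{L^2}\,\|g\|_{W_0^{s,\infty}}=\|f\|_{W^{m,2}}\,\|g\|_{W_0^{s,\infty}}.
\]
For $T_{\tau_2}(f,J^m g)$ I would invoke the mirror version of Theorem~\ref{LWL}, namely the boundedness $W_0^{s,\infty}\times L^2\to L^2$ for operators with symbols in $BS^0_{\rho,\delta}$. This follows by rerunning the proof of Theorem~\ref{LWL} verbatim after freezing $f$ instead of $g$ and swapping the roles of $\xi$ and $\eta$: the bilinear H\"ormander estimates are symmetric in $(\xi,\eta)$, and the linear $L^2$ input from \cite{cm1} applies identically to the resulting frozen symbol in $(x,\eta)$. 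Adding the two bounds delivers the claimed fractional Leibniz inequality.

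The principal obstacle is the symbol-class verification for $\tau_1$ and $\tau_2$, where one must simultaneously track derivatives hitting the base symbol, the cutoff, and the Bessel weight, and make genuine use of both $\rho\leq 1$ and the geometric comparability $1+|\xi|\sim 1+|\xi|+|\eta|$ on $\supp\phi$ (respectively $1+|\eta|\sim 1+|\xi|+|\eta|$ on $\supp\psi$). Everything that follows is a direct composition of Theorem~\ref{LWL} with standard Fourier-multiplier identities.
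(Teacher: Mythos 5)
Your proof follows essentially the same route as the paper: split frequency space according to which of $\abs{\xi}$ or $\abs{\eta}$ dominates, insert a Bessel weight on the dominant variable, check that the two resulting symbols are of order zero, and then apply Theorem~\ref{LWL} together with its $f\leftrightarrow g$ symmetric variant $W_0^{s,\infty}\times L^2\to L^2$. The paper realizes the cone decomposition through a scalar function of the ratio $(1+\abs{\xi}^2)/(1+\abs{\eta}^2)$, while you build a homogeneous partition of unity on $\brn\times\brn$ directly; these are equivalent devices. One point worth flagging: as printed, the paper's formula for $\sigma_1$ pairs the weight $(1+\abs{\xi}^2)^{-m/2}$ with the cutoff $\phi\bigl((1+\abs{\xi}^2)/(1+\abs{\eta}^2)\bigr)$, which (since $\supp\phi\subset[-2,2]$) localizes to the region $\abs{\xi}\lesssim\abs{\eta}$; there the decay $(1+\abs{\xi})^{-m}$ from the weight does \emph{not} absorb the growth $(1+\abs{\xi}+\abs{\eta})^m\sim(1+\abs{\eta})^m$ of $\sigma$, so $\sigma_1\notin BS^0_{\rho,\delta}$ as written. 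The two cutoffs should be interchanged so that $\sigma_1$ lives where $\abs{\xi}\gtrsim\abs{\eta}$ --- which is exactly what your $\tau_1$ does, via $\supp\phi\subset\{\abs{\xi}\ge c\abs{\eta}\}$. Your version is the correct one, and your more detailed verification of $\tau_1,\tau_2\in BS^0_{\rho,\delta}$ and the explicit appeal to the mirror bound fill in steps the paper's sketch leaves implicit.
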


\begin{proof} Corollary \ref{leibm} follows from Theorem \ref{LWL} and composition with Bessel potentials of order $m$, along the lines of Theorem 2 in \cite{bnt}. We only need to notice that, if $\sigma \in BS^m_{\rho,\delta}$ and $\phi$ is a $C^\infty$ function on $\re$ such that $0 \leq \phi \leq 1$, $\supp(\phi) \subset [-2, 2]$ and $\phi(r) + \phi(1/r) = 1$ on $[0,\infty)$, then, the symbols $\sigma_1$ and $\sigma_2$ defined by
$$
\sigma_1(x,\xi, \eta)= \sigma(x,\xi, \eta) \phi\left(\frac{1+|\xi|^2}{1+|\eta|^2} \right) (1+|\xi|^2)^{-m/2}
$$
and
$$
\sigma_2(x,\xi, \eta)= \sigma(x,\xi, \eta) \phi\left(\frac{1+|\eta|^2}{1+|\xi|^2} \right) (1+|\eta|^2)^{-m/2}
$$
satisfy $\sigma_1, \sigma_2 \in BS^0_{\rho,\delta}$, and the corresponding operators $T_\sigma$, $T_{\sigma_1}$, and $T_{\sigma_2}$ are related through
$$
T_\sigma(f,g) = T_{\sigma_1}(J^m f, g) + T_{\sigma_2}(f,J^m g),
$$
where $J^m$ denotes the linear Fourier multiplier with symbol $(1+|\xi|^2)^{m/2}$.
\end{proof}

\end{document}